\documentclass[review,onefignum,onetabnum]{siamart190516}



\usepackage{amssymb}
\usepackage{algpseudocode}
\usepackage{bm}
\usepackage{booktabs}
\usepackage{capt-of}
\usepackage{color, colortbl}
\usepackage{multirow}
\usepackage{siunitx}
\sisetup{output-exponent-marker=\ensuremath{\mathrm{e}}}
\sisetup{retain-explicit-plus}
\usepackage{subcaption}
\usepackage[sort,compress]{cite}

\usepackage{lipsum}
\usepackage{amsfonts}
\usepackage{graphicx}
\usepackage{epstopdf}
\epstopdfsetup{
    suffix=,
}
\ifpdf
  \DeclareGraphicsExtensions{.eps,.pdf,.png,.jpg}
\else
  \DeclareGraphicsExtensions{.eps}
\fi

\usepackage{enumitem}
\setlist[enumerate]{leftmargin=.5in}
\setlist[itemize]{leftmargin=.5in}


\newsiamremark{remark}{Remark}
\newsiamremark{hypothesis}{Hypothesis}
\newsiamremark{example}{Example}
\crefname{hypothesis}{Hypothesis}{Hypotheses}
\newsiamthm{claim}{Claim}

\Crefname{subsection}{Section}{Sections}

\headers{Randomized Cholesky Factorization}{C. Chen, T. Liang and G. Biros}

\title{RCHOL: Randomized Cholesky Factorization for Solving SDD Linear Systems}

\author{Chao Chen\thanks{University of Texas at Austin, United States (\email{chenchao.nk@gmail.com},\email{liangty1998@hotmail.com}, \email{biros@oden.utexas.edu}).}
\and Tianyu Liang\footnotemark[2]
\and George Biros\footnotemark[3]}

\usepackage{amsopn}
\DeclareMathOperator{\diag}{diag}


\ifpdf
\hypersetup{
  pdftitle={Randomized Cholesky Factorization},
}
\fi





\newcommand{\rchol}{\texttt{rchol}}
\newcommand{\ichol}{\texttt{ichol}}

\newcommand{\bb}{\bm{b}}
\newcommand{\be}{\bm{e}}
\newcommand{\bone}{\bm{1}}

\newcommand{\G}{ {\mathcal{G}} }
\newcommand{\bigO}{ {\mathcal{O}} }

\newcommand{\cyan}[1]{{\textcolor{cyan}{#1}}}

\definecolor{Gray}{gray}{0.9}
\newcolumntype{g}{>{\columncolor{Gray}}c}

\newcolumntype{H}{>{\setbox0=\hbox\bgroup}c<{\egroup}@{}}

\graphicspath{{figs/}}

\nolinenumbers

\begin{document}
\maketitle

\begin{abstract}
    We introduce a randomized algorithm, namely \rchol{}, to construct an approximate Cholesky factorization for a given Laplacian matrix (a.k.a., graph Laplacian). From a graph perspective, the exact Cholesky factorization introduces a clique in the underlying graph after eliminating a row/column. By randomization, \rchol{}  only retains a sparse subset of the edges in the clique {using a random sampling developed by Spielman and Kyng~\cite{sampling}}.
    We prove \rchol{} is breakdown free and apply it to solving  large sparse linear systems with  symmetric diagonally-dominant matrices.
     In addition, we parallelize \rchol{} based on  the nested dissection ordering for shared-memory machines. We report numerical experiments that demonstrate the robustness and the scalability of \rchol{}. For example, 
     our parallel code scaled up to 64 threads on a single node for solving the  3D Poisson equation, discretized with the 7-point stencil on a $1024\times 1024 \times 1024$ grid, a problem that has \emph{one billion} unknowns.
\end{abstract}

\begin{keywords}	
  Randomized Numerical Linear Algebra, Incomplete Cholesky Factorization, Sparse Matrix, Symmetric Diagonally-dominant Matrix, Graph Laplacian, Random Sampling, Parallel Algorithm
\end{keywords}

\begin{AMS}
  65F08, 
  65F50, 
  62D05 
\end{AMS}

\section{Introduction}

We consider the solution of a large sparse linear system 
\begin{equation} \label{e:axb}
    A \, x = b,
\end{equation}
where $A = (a_{ij})\in\mathbb{R}^{N\times N}$ is a symmetric diagonally-dominant (SDD) matrix, i.e., 
\begin{equation} \label{e:sdd}
A=A^\top, \quad \text{and} \quad a_{ii} \ge \sum_{j\not=i} |a_{ij}| \quad \text{for } i=1,2,\ldots,N.
\end{equation}
Note we require the diagonal of an SDD matrix to be non-negative\footnote{A relaxed definition requires  $|a_{ii}| \ge \sum_{j\not=i} |a_{ij}|$ allowing negative diagonal entries. This relaxed definition is \emph{not} what we use in this paper.}.  The linear system \cref{e:axb} appears  in many scientific and engineering domains, e.g., the discretization of a partial differential equation (PDE) using finite difference or finite elements, spectral graph partitioning, and learning problems on graphs.

The essential ingredient of our method is the randomized Cholesky factorization (\rchol). When $A$ has only negative nonzero off-diagonal entries 
,  \rchol{}  computes an approximate Cholesky factorization 
\begin{equation} \label{e:chol}
P^\top A P \approx G G^\top,
\end{equation}
where $P$ is a permutation matrix and $G$ is a lower triangular matrix. Using $G G^\top$ as the preconditioner, we can solve \cref{e:axb} with the preconditioned Conjugate Gradient (PCG) method~\cite{saad2003iterative}. Generally, $A$ also has positive off-diagonal entries. In some cases (\Cref{s:bsdd}), we can find a diagonal matrix $D$ with $+1$ or $-1$ on the diagonal such that $DAD$ has only negative nonzero off-diagonal entries; otherwise, we solve an equivalent linear system that has only negative nonzero off-diagonal entries but is twice larger.

\subsection{Related work}


Direct solvers  compute exact factorizations of $A$ and generally require $\bigO(N^3)$ work and $\bigO(N^2)$ storage. Although  matrix $A$ is sparse, a naive direct method may introduce excessive new nonzero entries (a.k.a., fill-in) during the factorization. To minimize fill-in, sparse-matrix reordering schemes such as nested dissection (ND)~\cite{george1973nested} and approximate minimum degree (AMD)~\cite{amestoy1996approximate} are usually employed in state-of-the-art methods, namely, sparse direct solvers~\cite{davis2016survey}. One notable example is the nested-dissection multifrontal method (MF)~\cite{duff1983multifrontal,liu1992multifrontal}, where the elimination ordering and the data flow follow a special hierarchy of \emph{separator fronts}. When applied to matrix $A$ from the discretization of PDEs in three-dimensional space, MF generally reduces the computation and memory complexities to $\bigO(N^2)$ and $\bigO(N^{4/3})$, respectively. However, such costs, dominated by those for factorizing the largest {separator front} of size $\bigO(N^{2/3})$, are still prohibitive for large-scale problems.

Preconditioned iterative methods are often preferred for large scale problems~\cite{saad2003iterative}. A key design decision in iterative solvers is the preconditioner. State-of-the-art methods such as domain decomposition and multigrid methods work efficiently for a large class of problems, including SDD matrices.  A cheaper and simpler alternative is to use an approximate factorization as in \cref{e:chol}, and one popular strategy to compute such a factorization is the incomplete factorization~\cite{meijerink1977iterative}. An incomplete factorization permits fill-in at only specified locations in the resulting factorization. These locations  can be computed in two ways: statically, based on the sparsity structure of $A$ with a level-based strategy; or dynamically, generated during the factorization process with a threshold-based strategy~\cite{saad1994ilut} or its variants~\cite{scott2014hsl_mi28, hook2018max}. Because of its importance, an incomplete Cholesky factorization is often parallelized on single-node shared-memory machines, and this type of parallel algorithm has been  studied extensively~\cite{raghavan2010parallel,chow2015fine,kim2016task,anzt2018parilut}.  {Incomplete factorizations are widely used in computational science and engineering, especially when the underlying physics of a problem is difficult to exploit. Besides being used as a stand-alone preconditioner, an incomplete factorization is also an important algorithmic primitive in more sophisticated methods. For example, it can be used to precondition subdomain solves in domain decomposition schemes or as a smoother in multigrid methods. In this paper, we focus on a randomized scheme for constructing incomplete factorizations. Although we compare our method directly with other solvers, we would like to emphasize that we envision it as an algorithmic primitive in more complex solvers.}

More recently, a class of methods known as the Laplacian Paradigm  have been developed specifically for solving SDD linear systems as in \cref{e:axb}. 
In a breakthrough~\cite{spielman2004nearly}, Spielman and Teng  proved in 2004 that \cref{e:axb} can be solved in nearly-linear time. 
Despite the progress with asymptotically faster and simpler algorithms~\cite{Koutis_2011,kelner2013simple,lee2013efficient,kyng2016approximate}, 
practical implementations of these methods that are able to compete with state-of-the-art linear solvers are limited~\cite{koutis2011combinatorial,livne2012lean}.
A notable recent effort is \texttt{Laplacians.jl},\footnote{\url{https://github.com/danspielman/Laplacians.jl}} a Julia package containing linear solvers for Laplacian matrices, but no results have been reported for solving problems related to PDEs, the target application of our work. 
In this paper, we build on two established ideas: the SparseCholesky algorithm in~\cite{kyng2016approximate}; and a random sampling scheme implemented in \texttt{Laplacians.jl}. In the SparseCholesky algorithm, the Schur-complement update is written as a diagonal matrix plus the graph Laplacian of a clique. Then, edges in the clique are sampled and re-weighted, so the graph Laplacian of sampled edges equals that of the clique in expectation. {In \texttt{Laplacians.jl}, Spielman and Kyng~\cite{sampling} proposed another sampling strategy,} which empirically performed better but has not been analyzed, according to our knowledge and the software documentation.

\subsection{{Contributions}}

In this work, we focus on solving SDD linear systems arising from the discretization of PDEs, and the main ingredient of our approach is an approximate Cholesky factorization constructed via random sampling. In particular, we introduce a randomized Cholesky factorization for  Laplacian matrices {building on top of previous work by Spielman and Kyng~\cite{kyng2016approximate,sampling}}. As observed in~\cite{kyng2016approximate}, eliminating a row/column in the matrix is equivalent to subtracting the graph Laplacian of a \emph{star} and adding the graph Laplacian of a \emph{clique}. {Following~\cite{sampling},  we sample a sparse subset of the edges instead of keeping the full clique.} Our specific contributions  include the following:

\begin{itemize}

\item
  We prove that the sampled edges form a spanning tree on the clique, and consequently, \rchol{} is break-down free for an irreducible Laplacian matrix. We also extend \rchol{} to compute approximate factorizations for subclasses of SDD matrices that are not Laplacian matrices. For the rest of SDD matrices that we cannot apply \rchol{} directly, we clarify how to obtain an approximate solution of \cref{e:axb} under a given tolerance through solving an extended problem using PCG.

\item
We introduce a high-performance parallel algorithm for \rchol{} based on the ND ordering and the multifrontal method. We implemented the parallel algorithm using a task-based approach for shared-memory multi-core machines. Our software offering C++/MATLAB/Python interfaces is available at {\url{https://github.com/ut-padas/rchol}}. 

\item
We benchmarked our code on various problems: Poisson's equation, variable-coefficient Poisson's equation, anisotropic Poisson's equation, and problems from the SuiteSparse Matrix Collection\footnote{\url{https://sparse.tamu.edu/}}. With our benchmark results, we demonstrated the importance of using fill-reducing orderings, the stability and the scalability of our method. We also compared our method to the well-established incomplete Cholesky factorization with threshold dropping. 

\end{itemize}

Our results highlight several features of the new method that are distinct from existing deterministic incomplete Cholesky factorizations: (1) fill-reducing ordering (as opposed to natural/lexicographical ordering) such as AMD and ND improved the performance of our method; (2) the number of iterations required by PCG increased approximately logarithmically with the problem size for discretized 3D Poisson equation; and (3) the performance of our parallel algorithm is hardly affected by the number of threads used.

\subsection{Outline and notations}

The remainder of this paper is organized as follows. \Cref{s:cholesky} introduces \rchol{} with analysis. \Cref{s:sdd} focuses on  solving SDD linear systems and the parallel algorithm for \rchol{}.
\Cref{s:result} presents numerical experiments, and \Cref{s:end} discusses generalizations and draws conclusions.


Throughout this paper, matrices are denoted by capital letters with their entries
given by the corresponding lower case letter in the usual way, e.g., $A = (a_{ij} ) \in \mathbb{R}^{N \times N}$.
We adopt the MATLAB notation to denote a submatrix, e.g., $A(i,:)$ and $A(:,i)$ stand for the $i_{\mathrm{th}}$ row and $i_{\mathrm{th}}$ column in matrix $A$, respectively.


\section{Randomized Cholesky factorization for Laplacian matrix} \label{s:cholesky}

In this section, we focus on irreducible Laplacian matrices, which can be viewed as weighted undirected graphs that have only one connected component. Then, we introduce Cholesky factorization and give the first formal statement of {the clique sampling scheme by Spielman and Kyng~\cite{sampling} in the Laplacians.jl package}. Finally, we provide analysis on the resulting randomized Cholesky factorization.


\begin{definition}[Laplacian matrix~\cite{kyng2016approximate}] \label{d:lm}
Matrix $A \in \mathbb{R}^{N \times N}$ is a Laplacian matrix if (1) $A=A^\top$, (2) $ \sum_{j=1}^N a_{ij} = 0$ for $i=1,2,\ldots,N$, and (3) $a_{ij} \le 0$ when $i\not=j$.
\end{definition}


\begin{definition}[Irreducible matrix~\cite{spielman2014nearly}] \label{d:irreducible}
Matrix $A$ is irreducible if there does not exist a permutation matrix $P$ such that $P^\top A P$ is a block triangular matrix.
\end{definition}

\begin{lemma}[Irreducible Laplacian matrix] \label{l:diag}
Suppose $A \in \mathbb{R}^{N \times N}$ is an irreducible Laplacian matrix. If $N>1$, then $a_{ii} > 0$ for all $i=1,2,\ldots, N$; otherwise $A$ is a scalar zero.
\end{lemma}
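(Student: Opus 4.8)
The plan is to split the argument into the trivial case $N=1$ and the substantive case $N>1$. In the latter I would first establish that the diagonal is automatically non-negative directly from the Laplacian axioms, and then upgrade non-negativity to strict positivity by a contradiction argument that is the only place where irreducibility enters.

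When $N=1$, condition (2) of \cref{d:lm} (each row sums to zero) forces $a_{11}=0$, so $A$ is the scalar zero; this disposes of the second clause immediately. For $N>1$, I would combine the row-sum condition (2) with the off-diagonal sign condition (3) to write, for each index $i$,
\[
a_{ii} = -\sum_{j\neq i} a_{ij} = \sum_{j\neq i}\lvert a_{ij}\rvert \ge 0,
\]
using $a_{ij}\le 0$ for $j\neq i$. Thus the diagonal is non-negative for free, and the real content of the lemma is the strict inequality $a_{ii}>0$.

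To obtain strict positivity I would argue by contradiction. Suppose $a_{ii}=0$ for some $i$. The displayed identity then gives $\sum_{j\neq i}\lvert a_{ij}\rvert = 0$, so $a_{ij}=0$ for every $j\neq i$, and by symmetry (1) also $a_{ji}=0$ for every $j\neq i$. Hence the entire $i$th row and $i$th column of $A$ vanish. The decisive step is to translate this into a violation of irreducibility: interpreting $A$ as a weighted undirected graph, a zero $i$th row/column says that vertex $i$ is isolated. Since $N>1$ there is at least one other index, so permuting $i$ to the final position symmetrically reorders $A$ into a block-diagonal form with the singleton block $\{i\}$ decoupled from the remaining $N-1$ indices, contradicting \cref{d:irreducible}. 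Therefore no diagonal entry can vanish, and $a_{ii}>0$ for all $i$.

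I expect this final translation to be the main (though still elementary) obstacle, because it is where the purely algebraic conclusion ``row and column $i$ are zero'' must be matched against the combinatorial definition of irreducibility as the impossibility of a non-trivial symmetric block-diagonal reordering; once that correspondence is made explicit, the contradiction is immediate and everything else follows directly from the three Laplacian axioms.
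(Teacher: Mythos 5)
Your proposal is correct and complete: the $N=1$ case, the identity $a_{ii}=\sum_{j\neq i}\lvert a_{ij}\rvert$, and the contradiction in which a vanishing diagonal entry forces a zero $i$th row/column and hence a symmetric reordering into a block-diagonal form with the singleton block $\{i\}$, violating \cref{d:irreducible}. The paper states this lemma without any proof, treating it as elementary, and your argument is exactly the standard verification the authors left implicit, with the one genuinely non-trivial step (translating ``row and column $i$ vanish'' into reducibility) made properly explicit.
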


Note a Laplacian matrix is always positive semi-definite, and the null space is span\{$\bone$\} if it is irreducible. Below we state a well-known result that there exists a bijection between the class of Laplacian matrices and the class of weighted undirected graphs to prepare for the sampling algorithm.

\begin{definition}[Graph Laplacian] \label{d:gl}
Let $\G=(V,E)$ be a weighted undirected graph, where $V=(v_1,v_2,\ldots,v_N)$, and an edge $e_{ij} = (v_i,v_j) \in E$ carries weight $w_{ij}>0$. The graph Laplacian of $\G$ is
\begin{equation} \label{e:l}
L = \sum_{e_{ij} \in E} w_{ij} \, \bb_{ij} \bb_{ij}^\top,
\end{equation}
where $\bb_{ij}=\be_i-\be_j$, the difference of two standard bases $\be_i,\be_j \in \mathbb{R}^N$ (the order of different does not affect $L$).
\end{definition}

\begin{remark} \label{d:gl2} 
For completeness, we also mention another equivalent definition of graph Laplacian. Given a weighted undirected graph $\G=(V,E)$, the graph Laplacian of $\G$ is
\[
L = D - W,
\]  
where $W$ is the weighted adjacency matrix, i.e., $-w_{ij}$ is the weight associated with edge $e_{ij} \in E$, and $D$ is the weighted degree matrix, i.e., $d_{ii} = - \sum_{j\not= i} w_{ij}$ for all $i$.
\end{remark}

\begin{theorem} \label{t:equivalence}
\cref{d:lm} and \cref{d:gl} are equivalent: matrix $L$ in \cref{e:l} is a Laplacian matrix, and there exists a weighted undirected graph of which the graph Laplacian equals to a given Laplacian matrix.
\end{theorem}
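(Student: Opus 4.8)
The plan is to prove the equivalence by establishing each direction separately. The forward direction asserts that any $L$ of the form \cref{e:l} satisfies the three defining properties of \cref{d:lm}; the reverse direction asserts that any matrix satisfying \cref{d:lm} can be written in the form \cref{e:l} for a suitable choice of edges and positive weights. I would handle the forward direction first, since it also yields a row-sum identity that I will reuse in the reverse direction.

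For the forward direction, I would verify the three conditions of \cref{d:lm} directly from the summands $w_{ij}\,\bb_{ij}\bb_{ij}^\top$. Symmetry is immediate because each rank-one term is symmetric ($(\bb_{ij}\bb_{ij}^\top)^\top = \bb_{ij}\bb_{ij}^\top$) and symmetry is preserved under summation. For the zero-row-sum property, the key observation is $\bb_{ij}^\top \bone = (\be_i-\be_j)^\top\bone = 1-1 = 0$, so that $L\bone = \sum_{e_{ij}\in E} w_{ij}\,\bb_{ij}(\bb_{ij}^\top\bone) = 0$, which is exactly $\sum_j a_{ij}=0$ for every $i$. For the sign of off-diagonals, I would compute the entry pattern of $\bb_{ij}\bb_{ij}^\top$: it contributes $+1$ at positions $(i,i)$ and $(j,j)$ and $-1$ at $(i,j)$ and $(j,i)$, and zero elsewhere. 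Hence an off-diagonal entry $(k,l)$ with $k\neq l$ receives only the contribution $-w_{kl}\le 0$ from the edge $e_{kl}$ (and $0$ if that edge is absent), which gives condition (3).

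For the reverse direction, given a Laplacian matrix $A$ I would construct the graph $\G=(V,E)$ by placing an edge $e_{ij}$ with weight $w_{ij} = -a_{ij}$ exactly when $i<j$ and $a_{ij}<0$; condition (3) of \cref{d:lm} guarantees $w_{ij}>0$, so this is a legitimate weighted graph. Let $L$ denote its graph Laplacian from \cref{e:l}. Using the entry pattern above, the off-diagonal entries match immediately: for $k\neq l$ we get $L_{kl} = -w_{kl} = a_{kl}$ when $a_{kl}<0$, and $L_{kl}=0=a_{kl}$ otherwise. The only point requiring care is the diagonal, and here I would invoke the zero-row-sum property shared by both matrices: since $L$ has zero row sums (by the forward direction) and $A$ has zero row sums (by assumption), and since they already agree off the diagonal, we obtain $L_{kk} = -\sum_{l\neq k} L_{kl} = -\sum_{l\neq k} a_{kl} = a_{kk}$, so $L=A$.

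I do not expect a genuine obstacle, as both directions are verifications rather than constructions of substance. The one place that deserves attention is the diagonal in the reverse direction: rather than matching it by a separate computation, the cleanest argument reconstructs the diagonal from the off-diagonal entries via the common zero-row-sum structure, which is why I sequence the forward direction first. A minor bookkeeping point is the edge convention — restricting to $i<j$ with strictly negative $a_{ij}$ — which ensures weights are positive and each undirected edge is counted once.
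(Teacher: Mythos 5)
Your proof is correct and takes essentially the same route as the paper: the same entry pattern of $\bb_{ij}\bb_{ij}^\top$ underlies the forward direction, and the converse uses the identical graph construction (an edge of weight $-a_{ij}>0$ at each strictly negative off-diagonal, i.e., adjacency $D-A$). The only difference is presentational: the paper concludes the converse by appealing to the equivalent $L=D-W$ formulation in \cref{d:gl2}, while you verify agreement with \cref{e:l} directly, matching off-diagonals entrywise and recovering the diagonal from the shared zero-row-sum property — precisely the step the paper's appeal to \cref{d:gl2} leaves implicit.
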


\begin{proof}
Note that
\[
\bordermatrix{
& & i & & j  &\cr
& \ddots &   & & & \cr
i &  &  1 &  \ldots & -1 & \cr
&  & \vdots & & \vdots & \cr
j &  &  -1 &  \ldots & 1 & \cr
&  &   &  &  & \ddots \cr
}
= \bb_{ij} \, \bb_{ij}^\top,
\] 
and it is straightforward to verify that $L$ in \cref{e:l} is a Laplacian matrix. In the other direction, for a given Laplacian matrix $A$, we can construct a weighted undirected graph $\G$ based on the weighted adjacency matrix $D-A$, where $D$ contains the diagonal of $A$. According to \cref{d:gl2}, $A$ is the graph Laplacian of $\G$.  
\end{proof}

\subsection{Cholesky factorization and clique sampling}

Consider applying the Cholesky factorization to an irreducible Laplacian matrix $L \in \mathbb{R}^{N \times N}$ for $N-1$ steps as shown in \cref{a:chol}. It is straightforward to verify that $L$ is always a Laplacian matrix inside the for-loop (line 4). Furthermore, the Schur complement at the $k_\mathrm{th}$ step, i.e., $L(k\text{+1:}N, k\text{+1:}N)$, is an irreducible Laplacian matrix for $k=1,2,\ldots,N-1$. According to \cref{l:diag}, we know that $\ell_{kk} > 0$ at line 3 and $\ell_{NN}=0$ after the for-loop. An irreducible Laplacian matrix corresponds to a connected graph, and the zero Schur complement, which stands for an isolated vertex, would not occur earlier until the other $N-1$ vertices have been eliminated.



\begin{algorithm}
\caption{Classical Cholesky factorization for Laplacian matrix}
\label{a:chol}
\begin{algorithmic}[1]
\Require irreducible Laplacian matrix $L \in \mathbb{R}^{N \times N}$
\Ensure lower triangular matrix $G \in \mathbb{R}^{N \times N}$
\State $G = \bm{0}_{N \times N}$
\For {$k=1$ \textbf{to} $N-1$}
\State $G(:,k) = L(:,k)/\sqrt{\ell_{kk}}$
\hfill \Comment \cyan{// $\ell_{kk} > 0$ for an irreducible Laplacian input}
\State $L = L - \frac{1}{\ell_{kk}} \, L(:, k) \, L(k, :)$
\hfill \Comment \cyan{// dense Schur-complement update}
\EndFor
\end{algorithmic}
\end{algorithm}

At the $k_\mathrm{th}$ step in \cref{a:chol}, the elimination (line 4) leads to a dense sub-matrix in the Schur complement. Next, we use the idea of random sampling to reduce the amount of fill-in. At the $k_\mathrm{th}$ step, we define the neighbors of $k$ as 
\begin{equation} \label{e:nbor}
\mathcal{N}_k \triangleq \{i: \ell_{ki}\not=0, i\not=k\},
\end{equation}
corresponding to vertices connected to vertex $k$ in the underlying graph. We also define the graph Laplacian of the sub-graph consisting of $k$ and its neighbors as
\begin{equation}
L^{(k)} \triangleq \sum_{i \in \mathcal{N}_k} \, (-\ell_{ki}) \,\, \bb_{ki} \bb_{ki}^\top
\end{equation}
It is observed in~\cite{kyng2016approximate} that the elimination at line 4 in \cref{a:chol} can be written as the sum of two Laplacian matrices:
\[
L - \frac{1}{\ell_{kk}} \, L(:, k) \, L(k, :) 
=
\underbrace{L - L^{(k)}}_{\text{Laplacian matrix}} +
\underbrace{L^{(k)} - \frac{1}{\ell_{kk}}L(:, k)L(k, :)}_{\text{Laplacian matrix}}
\]
The first term is the graph Laplacian of the sub-graph consisting of all edges except the ones connected to $k$. Since  
\[
L(:,k)  - L^{(k)}(:,k) = 0, \quad L(k,:) - L^{(k)}(k,:) = 0,
\]
we know $L - L^{(k)}$ zeros out the $k_\mathrm{th}$ row/column in $L$ and updates the diagonal entries in $L$ corresponding to $\mathcal{N}_k$.

The second term
\begin{equation} \label{e:clique}
    L^{(k)} - \frac{1}{\ell_{kk}}L(:, k)L(k, :) = 
    \frac{1}{2} \sum_{i,j \in \mathcal{N}_k} \frac{\ell_{ki} \, \ell_{kj}}{\ell_{kk}} \, \bb_{ij} \bb_{ij}^\top
\end{equation}
is the graph Laplacian of the clique among neighbors of $k$, where the edge between neighbor $i$ and neighbor $j$ carries weight ${\ell_{ki} \, \ell_{kj}}/{\ell_{kk}}$. Denote the number of neighbors of $k$ as $n$, i.e., 
\[
n \triangleq |\mathcal{N}_k|
\]
 Note \cref{e:clique} is a dense matrix with $n^2$ entries or a clique with $\bigO(n^2)$ edges. The idea of randomized Cholesky factorization is to sample $\bigO(n)$ edges from the clique (and assign new weights), corresponding to $\bigO(n)$ fill-in entries. The randomized algorithm is shown in \cref{a:rchol}, and the difference from \cref{a:chol} is shown pictorially with an example in \cref{f:chol}.

 

\begin{algorithm}
\caption{Randomized Cholesky factorization for Laplacian matrix}
\label{a:rchol}
\begin{algorithmic}[1]
\Require irreducible Laplacian matrix $L \in \mathbb{R}^{N \times N}$
\Ensure lower triangular matrix $G \in \mathbb{R}^{N \times N}$ 
\State $G = \bm{0}_{N \times N}$
\For {$k=1$ \textbf{to} $N-1$}
\State $G(:,k) = L(:,k)/\sqrt{\ell_{kk}}$
\hfill \Comment \cyan{// $\ell_{kk} > 0$ according to \cref{th:stable}}
\State $L = L - L^{(k)} + \Call{SampleClique}{L, k}$
\hfill \Comment \cyan{// sparse Schur-complement update}
\EndFor
\end{algorithmic}
\end{algorithm}

\begin{figure}
\hfill
\begin{subfigure}{.25\textwidth}
  \centering
  \includegraphics[width=.8\linewidth]{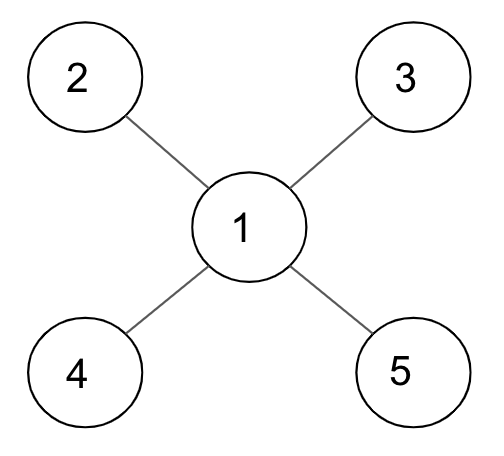}  
\end{subfigure}
\hfill
\begin{subfigure}{.25\textwidth}
  \centering
  \includegraphics[width=.8\linewidth]{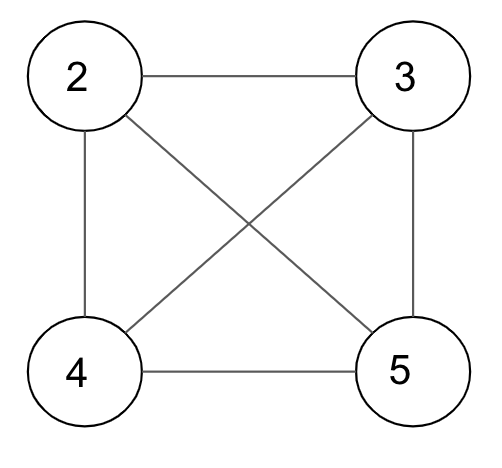}  
\end{subfigure}
\hfill
\begin{subfigure}{.25\textwidth}
  \centering
  \includegraphics[width=.8\linewidth]{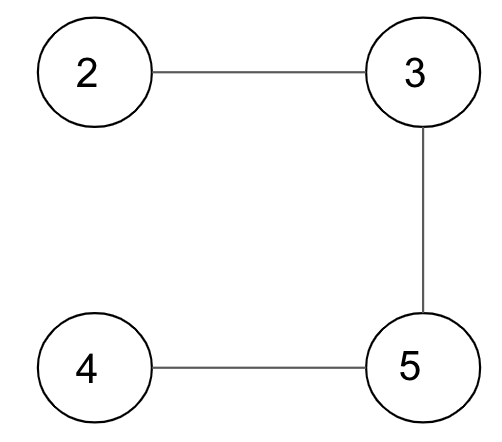}  
\end{subfigure}
\hfill
\caption{An example: (left) graph of $L$ before vertex 1 is eliminated; (middle) graph of the Schur complement after vertex 1 is eliminated; and (right) a randomly sampled subset of the clique.}
\label{f:chol}
\begin{center}
\scalebox{0.24}{\includegraphics{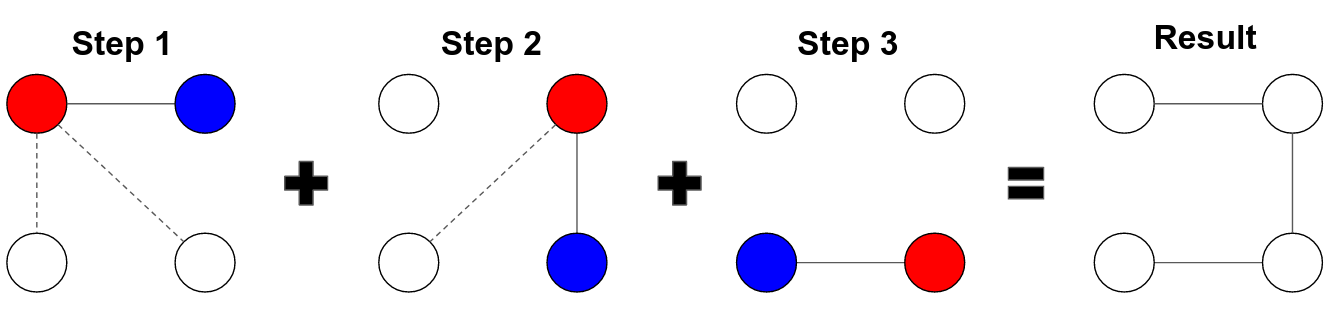}}
\caption{An instance of \cref{a:sample} for the example in \cref{f:chol}. At every step, the red vertex stands for $i \in \mathcal{N}$ at Line 5 in \cref{a:sample}; the  blue vertex stands for $j \in \mathcal{N}$ at Line 8; the solid line is the sampled edge; and the dashed lines are other potential candidates for sampling.
}
\label{f:sample}
\end{center}
\end{figure}


The pseudocode of the sampling algorithm is shown in \cref{a:sample}, which selects $n-1$ edges from a clique among $n$ vertices as follows. Before sampling, the neighbors of $k$ are sorted in ascending order based on their weights $|\ell_{ki}|$. For every $i \in \mathcal{N}_k$, we sample $j \in \mathcal{N}_k$ such that $|\ell_{kj}| > |\ell_{ki}|$ with a probability proportional to $|\ell_{kj}|$. Then, an edge between $i$ and $j$ is created with an appropriate weight (so the graph Laplacian of the sampled edges equals to \cref{e:clique} in expectation; see \cref{th:sample}). \cref{f:sample} shows an example of the sampling process step-by-step.

\begin{algorithm}
\caption{Sample clique ({by Spielman and Kyng~\cite{sampling}})}
\label{a:sample}
\begin{algorithmic}[1]
\Require Laplacian matrix $L \in \mathbb{R}^{N \times N}$ and elimination index $k$
\Ensure graph Laplacian of sampled edges $C \in \mathbb{R}^{N \times N}$ 
\State $C = \bm{0}_{N \times N}$
\State Sort $\mathcal{N}_k$ in ascending order based on $|\ell_{ki}|$ for $i \in \mathcal{N}_k$
\hfill \Comment{\cyan{// $\mathcal{N}_k$ defined in Eq. \cref{e:nbor}}}
\State $S=\ell_{kk}$
\hfill \Comment{\cyan{// $\ell_{kk} = - \sum_{i \in \mathcal{N}_k} \ell_{ki}$}}
\While{$ \left| \mathcal{N}_k \right| > 1$}
\State Let $i$ be the first element in $\mathcal{N}_k$
\hfill \Comment{\cyan{// loop over neighbors}}
\State{$\mathcal{N}_k = \mathcal{N}_k / \{i\}$} 
\hfill \Comment{\cyan{// remove $i$ from the set}}
\State $S=S+\ell_{ki}$
\hfill \Comment{\cyan{// $S = - \sum_{j \in \mathcal{N}_k} \ell_{kj}$}}
\State{Sample  $j$ from $\mathcal{N}_k$ with {probability $|\ell_{kj}|/S$}} 
\State $C = C - \frac{S\, \ell_{ki}}{\ell_{kk}} \, \bb_{ij} \bb_{ij}^\top$
\hfill \Comment{\cyan{// pick edge $(i,j)$; assign weight ${S\, |\ell_{ki}|}/{\ell_{kk}}$}}
\EndWhile
\end{algorithmic}
\end{algorithm}

\subsection{Analysis of randomized Cholesky factorization} \label{s:analysis}


In this section, we prove the robustness and the scalability of \rchol{}. The following theorem shows that the  edges sampled by \cref{a:sample} form a spanning tree, and consequently,  \cref{a:rchol} never breaks down.

\begin{theorem}[Spanning tree on clique] \label{th:connect}
The sampled edges in \cref{a:sample} form a spanning tree of the clique on neighbors of $k$.
\end{theorem}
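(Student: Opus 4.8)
The plan is to prove the statement by showing that the sampled graph has exactly $n-1$ edges and is connected, since a connected graph on $n$ vertices with $n-1$ edges is automatically a spanning tree (equivalently, it is acyclic). First I would count the edges directly from the loop structure of \cref{a:sample}: the while loop begins with $|\mathcal{N}| = n$, removes exactly one vertex per iteration at line 6, and terminates once $|\mathcal{N}| = 1$, so it executes $n-1$ times and appends one edge at line 9 in each iteration, giving $n-1$ edges in total. Along the way I would verify that the sampling at line 8 is well defined: after the removal at line 6 the set $\mathcal{N}$ is still nonempty (it had size at least $2$ beforehand), and by the updated identity $S = -\sum_{j \in \mathcal{N}} \ell_{kj} = \sum_{j \in \mathcal{N}} |\ell_{kj}| > 0$ the quantities $|\ell_{kj}|/S$ form a genuine probability distribution over the remaining neighbors.

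The heart of the argument is to exploit the sorting. Fix the total order $v_1, v_2, \ldots, v_n$ produced by the ascending sort at line 2 (ties broken arbitrarily but fixed once and for all). At iteration $m$ the algorithm processes and removes the first remaining vertex, namely $v_m$, and then samples its partner from the remaining set $\{v_{m+1}, \ldots, v_n\}$. Hence the edge added at iteration $m$ has the form $(v_m, v_{\sigma(m)})$ with $\sigma(m) > m$: every $v_m$ with $m < n$ is joined to exactly one vertex appearing strictly later in the order, while $v_n$ is never removed and so is the removed endpoint of no edge. The crucial point to emphasize is that this strict inequality holds for the sorted \emph{positions}, not the magnitudes, so it remains valid even when several neighbors share the same value of $|\ell_{ki}|$.

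I would then read $v_{\sigma(m)}$ as the parent of $v_m$ and $v_n$ as the root. To establish connectivity, start from any $v_m$ and follow parent pointers; the position index strictly increases at each step and is bounded above by $n$, so the walk must terminate, and it can only terminate at the unique vertex that is never removed, namely $v_n$. Thus every vertex is joined to $v_n$ through the sampled edges, so the graph is connected, and together with the edge count this gives a spanning tree. Equivalently, the strictly increasing indices make any cycle impossible, so the $n-1$ edges form an acyclic connected graph on all $n$ neighbors.

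The proof is essentially combinatorial and I do not expect a serious obstacle; the one place demanding care is the claim that the sampled partner always lies strictly later in the processing order. This relies on the fact that $i$ is removed from $\mathcal{N}$ at line 6 \emph{before} the sampling at line 8, so that $j \ne i$ and $j$ is drawn only from the not-yet-processed vertices. Making this precise, and phrasing the monotonicity in terms of sorted positions rather than weight magnitudes to sidestep ties, is the key step that upgrades the raw edge count into a genuine spanning tree and thereby guarantees (by \cref{l:diag}) that \cref{a:rchol} never breaks down.
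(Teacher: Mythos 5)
Your proof is correct and follows essentially the same route as the paper's: count the $n-1$ sampled edges (one per iteration of the while loop) and establish connectivity by following each processed vertex to a partner occurring later in the sorted order until the last element of $\mathcal{N}$ is reached. Your phrasing of the monotone step in terms of sorted \emph{positions} rather than weight magnitudes is in fact slightly more careful than the paper's argument, which asserts the strict inequality $|\ell_{ki}| < |\ell_{kj}|$ for the sampled edge and thereby glosses over possible ties among the $|\ell_{ki}|$.
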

\begin{proof}
Suppose $k$ has $n$ neighbors. Observe $n-1$ edges are sampled and all neighbors are included in the graph formed by these edges. It remains to be proved that this graph is connected.
 
Suppose the set of neighbors $\mathcal{N}_k$ is sorted in ascending order. We can find a path between any $i \in \mathcal{N}_k$ and the last/``heaviest'' element in $\mathcal{N}_k$ with the following rational:
\begin{enumerate}
\item
Start from any $i \in \mathcal{N}_k$. Suppose a sampled edge goes from $i$ to a ``heavier'' neighbor $j \in \mathcal{N}_k$ ($|\ell_{ki}| < |\ell_{kj}|$).
\item
Move to $j$, and repeat the previous process. It follows that we will reach the ``heaviest'' neighbor after a finite number of steps.
\end{enumerate}

\end{proof}



\begin{corollary}[Breakdown free] \label{th:stable}
In \cref{a:rchol}, $\ell_{kk} > 0$ at line 3 and $\ell_{\text{NN}}=0$ after the for-loop.
\end{corollary}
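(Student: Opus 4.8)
The plan is to prove by induction on $k$ that, at the beginning of the $k_{\mathrm{th}}$ step of \cref{a:rchol} (line 3), the trailing submatrix $L(k\text{:}N,\,k\text{:}N)$ on the not-yet-eliminated vertices is an irreducible Laplacian matrix. Granting this invariant, the corollary is immediate from \cref{l:diag}: for $k \le N-1$ the active submatrix has size $N-k+1 \ge 2$, so each of its diagonal entries---in particular $\ell_{kk}$---is strictly positive; and after the loop only vertex $N$ remains, giving an irreducible Laplacian of size $1$, which by \cref{l:diag} must be the scalar zero, so $\ell_{NN}=0$.

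The base case $k=1$ holds by hypothesis, since the input $L$ is an irreducible Laplacian matrix. For the inductive step I would check two properties of the update $L \leftarrow L - L^{(k)} + \mathrm{SampleClique}(L)$. The first is preservation of the Laplacian structure. As noted after \cref{e:clique}, $L - L^{(k)}$ is the graph Laplacian of the subgraph obtained by deleting all edges incident to $k$, and it zeros out the $k_{\mathrm{th}}$ row and column. The matrix $C = \mathrm{SampleClique}(L)$ is a sum of terms $w\,\bb_{ij}\bb_{ij}^\top$ with weights $w = -S\,\ell_{ki}/\ell_{kk} > 0$ (using $\ell_{ki}<0$, $S>0$, and $\ell_{kk}>0$), hence a graph Laplacian by \cref{t:equivalence}. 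A sum of Laplacians is a Laplacian, and since neither summand has a nonzero $k_{\mathrm{th}}$ row or column, the active part reduces to $L(k\text{+1:}N,\,k\text{+1:}N)$, itself a Laplacian.

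The crux---and the step I expect to be the main obstacle---is showing that irreducibility, equivalently connectivity of the associated graph (\cref{d:irreducible}), survives the \emph{randomized} update. In exact Cholesky (\cref{a:chol}) this is automatic because eliminating $k$ introduces the entire clique on $\mathcal{N}$; in \rchol{} only the sampled edges remain. Here I would invoke \cref{th:connect}, by which the sampled edges form a spanning tree on $\mathcal{N}$, so the neighbors of $k$ stay mutually connected once $k$ is removed. Concretely, take any two surviving vertices $u,v \in \{k\text{+1},\ldots,N\}$; by the inductive hypothesis there is a path between them in the connected graph on $\{k,\ldots,N\}$. Every occurrence of $k$ along this path has the form $a \to k \to b$ with $a,b \in \mathcal{N}$, and the spanning tree supplies a substitute path from $a$ to $b$ lying entirely within $\mathcal{N}$; rerouting through it eliminates $k$. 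Thus $u$ and $v$ remain connected in the graph on $\{k\text{+1},\ldots,N\}$, the trailing submatrix is irreducible, and the induction closes.
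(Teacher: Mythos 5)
Your proof is correct and takes essentially the same approach as the paper: the paper likewise combines \cref{th:connect} (the sampled edges form a connected graph on the neighbors of $k$) with \cref{l:diag}, asserting that the Schur complement $L(k\text{+1:}N, k\text{+1:}N)$ remains an irreducible Laplacian matrix at every step. Your explicit induction, the positivity check on the sampled weights, and the path-rerouting argument simply flesh out the step the paper labels ``straightforward to verify.''
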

\begin{proof}
Since \cref{a:sample} returns a graph Laplacian of a connected graph among the neighbors of $k$ at line 4 in \cref{a:rchol}, it is straightforward to verify that the Schur complement at the $k_\mathrm{th}$ step (i.e., $L(k\text{+1:}N, k\text{+1:}N)$) is an irreducible Laplacian matrix. Therefore, this corollary holds according to \cref{l:diag}.
\end{proof}


{The next theorem addresses the time complexity and the storage of \rchol{} employing a random elimination ordering, which follows the argument in~\cite{kyng2016approximate} closely. (We prove this in \cref{s:a}).} 

\begin{theorem}[Running time and storage] \label{th:complexity}
Suppose an irreducible Laplacian matrix $L\in \mathbb{R}^{N \times N}$ has $M$ non-zeros, and a random row/column is eliminated at every step in \cref{a:rchol}.
Then, the expected running time of \cref{a:rchol} is upper bounded by $\bigO(M \log N)$, and the expected number of non-zeros in the output triangular matrix $G$ is upper bounded by $\bigO(M \log N)$.
\end{theorem}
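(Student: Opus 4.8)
The plan is to charge both the running time and the fill-in at each step to the degree of the vertex being eliminated, and then to bound the expected sum of these degrees over the whole factorization. Let $n_k$ denote the number of neighbors of the vertex eliminated at step $k$ (the quantity $n$ in \cref{a:sample}). By \cref{th:connect}, \cref{a:sample} returns exactly $n_k-1$ edges, so the $k_{\mathrm{th}}$ column of $G$ has $\bigO(n_k)$ nonzeros and the sparse Schur-complement update in \cref{a:rchol} touches only $\bigO(n_k)$ entries; thus both the storage and the per-step arithmetic (including the weighted sampling) are $\bigO(n_k)$. It therefore suffices to prove $\mathbb{E}\!\left[\sum_{k=1}^{N-1} n_k\right] = \bigO(M\log N)$.

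First I would establish that the number of off-diagonal nonzeros never increases. Eliminating a degree-$n_k$ vertex deletes its $n_k$ incident edges and, by \cref{th:connect}, creates at most $n_k-1$ new ones (fewer when a sampled edge coincides with an existing edge and the two weights merge). Hence, writing $m_k$ for the number of edges present at the start of step $k$, we have $m_{k+1}\le m_k-1$, and in particular $2m_k\le 2m_1\le M$ for every $k$. Next I would use the random elimination order: since a vertex is drawn uniformly from the $N-k+1$ survivors, conditioning on the history $\mathcal{F}_{k-1}$ up to step $k-1$, the expected degree of the eliminated vertex equals the average survivor degree,
\[
\mathbb{E}\!\left[n_k \mid \mathcal{F}_{k-1}\right] = \frac{2m_k}{N-k+1} \le \frac{M}{N-k+1}.
\]
Taking total expectations and summing produces a harmonic series,
\[
\mathbb{E}\!\left[\sum_{k=1}^{N-1} n_k\right] \le \sum_{k=1}^{N-1}\frac{M}{N-k+1} = M\sum_{t=2}^{N}\frac{1}{t} = \bigO(M\log N),
\]
from which the storage bound ($\sum_k (n_k+1) = \bigO(M\log N)$, using $N-1\le M$) and the running-time bound follow at once.

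The crux of the argument is the second step: controlling the expected degree requires that the current edge count stay $\bigO(M)$, and this is precisely what the spanning-tree property of \cref{th:connect} buys us. Ordinary (exact) elimination, as in \cref{a:chol}, would add $\Theta(n_k^2)$ clique edges and destroy the monotone bound $m_{k+1}\le m_k$, letting the average survivor degree, and hence $\mathbb{E}[n_k]$, blow up. I would expect the one genuinely delicate point to be the accounting of the per-step cost as $\bigO(n_k)$: the sort on line 2 of \cref{a:sample} costs $\bigO(n_k\log n_k)$ if implemented naively, which would inflate the time bound by an extra logarithmic factor. Matching the stated $\bigO(M\log N)$ relies on performing the weighted sampling in linear time (for instance by bucketing the neighbor weights), so that the only logarithmic factor is the one arising from the harmonic sum above.
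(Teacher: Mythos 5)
Your proposal is correct and follows essentially the same route as the paper's proof: charge the per-step work and fill to the degree $n_k$ of the eliminated vertex, use the spanning-tree property of \cref{th:connect} to keep the edge count non-increasing, bound $\mathbb{E}[n_k]$ by the average surviving degree under uniform random elimination, and sum the resulting harmonic series to get $\bigO(M\log N)$. Your closing caveat about the sort in \cref{a:sample} is a fair refinement that the paper glosses over (it simply asserts $\bigO(n)$ cost per call), but it does not alter the structure of the argument.
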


The next theorem shows that \cref{a:sample} returns an unbiased estimator at every step in \cref{a:rchol}.



\begin{theorem}[Unbiased estimator] \label{th:sample}
At the $k_\mathrm{th}$ step in \cref{a:rchol}, the expectation of $C = \text{SampleClique(L,k)}$ equals to the result of exact elimination, as defined in \cref{e:clique}. 
\end{theorem}
\begin{proof}
Suppose $i,j \in \mathcal{N}_k$ and $0 < |\ell_{ki}| < |\ell_{kj}|$. The probability that edge $(i,j)$ being sampled is 
$
P_{ij} = {|\ell_{kj}|}/{S},
$
according to Line 8 in \cref{a:sample}. Therefore, we have
\begin{align*}
\mathbb{E}[C] 
&= \sum_{i,j \in \mathcal{N}_k \text{ and } |\ell_{ki}| < |\ell_{kj}|} P_{ij} \frac{ S\, (-\ell_{ki})}{\ell_{kk}} \, \bb_{ij} \bb_{ij}^\top
=  \sum_{i,j \in \mathcal{N}_k \text{ and } |\ell_{ki}| < |\ell_{kj}|} \frac{\ell_{kj} \, \ell_{ki}}{\ell_{kk}} \, \bb_{ij} \bb_{ij}^\top 
\end{align*}
\end{proof}

\subsection{Relation to approximate Cholesky factorizations in~\cite{kyng2016approximate} and~\cite{sampling}}

While both \rchol{} and  the method in~\cite{kyng2016approximate} follow the same template of \cref{a:rchol}, they differ in two manners. The first difference is that the algorithms of clique sampling  are different. In~\cite{kyng2016approximate} the authors propose to sample $n$ edges from a clique at every step in \cref{a:chol}. To sample an edge, a neighbor $i$ is sampled uniformly from $\mathcal{N}_k$, and a neighbor $j$ is sampled from $\mathcal{N}_k$ with probability $|\ell_{kj}|/\ell_{kk}$; then, an edge between $i$ and $j$ is created with weight $\ell_{ki}\ell_{kj}/|\ell_{ki}+\ell_{kj}|$ if $i \not= j$. With such a sampling strategy, an edge can be sampled repeatedly, and there is a probability that no edge is created (when $i$ and $j$ are identical). So \cref{a:sample} can be viewed as a derandomized variant of the sampling in~\cite{kyng2016approximate}. 

The other difference is that there is an extra initialization step before entering \cref{a:rchol} in~\cite{kyng2016approximate}. For a Laplacian matrix, the initialization is to split every edge in the associated graph into $\rho = \bigO(\log^2 N)$ copies with $1/\rho$ of the original weight. Then,  the resulting \emph{multi-graph} becomes the input of \cref{a:rchol}. It was proven that the norm of the \emph{normalized graph Laplacian} associated with every edge in the multi-graph is upper bounded by $1/\rho$ throughout the factorization with the aforementioned sampling algorithm. As a result, a nearly-linear time solver was obtained as  the following theorem states.

\begin{theorem}[Approximate Cholesky factorization in~\cite{kyng2016approximate}]
Let $L \in \mathbb{R}^{N \times N}$ be an irreducible Laplacian matrix with $M$ non-zeros, and $P \in \mathbb{R}^{N \times N}$ be a random permutation matrix. If we perform the above initialization step on $P^\top L P$ and apply  \cref{a:rchol} with the above sampling algorithm, then the expected running time is $\bigO(\rho M \log N) = \bigO(M \log^3 N)$, and the expected number of non-zeros in the output triangular matrix $G$ is $\bigO(\rho M \log N) = \bigO(M \log^3 N)$. In addition, with high probability,
\[
\frac{1}{2}  L  \preceq {(PG)} {(PG)}^\top \preceq \frac{3}{2}  L .
\]
(For two symmetric matrices $A$ and $B$, the notation $A \preceq B$ means that $B-A$ is a positive semi-definite matrix.)
\end{theorem}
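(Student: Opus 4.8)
The statement bundles three claims, so the plan is to dispatch the two complexity bounds quickly and then concentrate on the spectral guarantee, which is the heart of~\cite{kyng2016approximate}.

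\textbf{Complexity.} The initialization replaces every edge by $\rho$ parallel copies, so the multi-graph handed to \cref{a:rchol} has exactly $\rho M$ non-zeros while representing the same matrix $L$. Applying the Running Time and Storage theorem verbatim to this multi-graph bounds both the expected running time and the expected fill-in by $\bigO(\rho M \log N)$; substituting $\rho = \bigO(\log^2 N)$ yields the stated $\bigO(M\log^3 N)$.

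\textbf{Spectral bound: setup.} For the approximation bound I would build on the martingale structure of \cref{s:analysis}. Since $P$ is a fixed (deterministic) permutation and $(PG)(PG)^\top = P\,(GG^\top)\,P^\top$, it suffices to prove the bound for $P=I$; I therefore write $L$ for the split input $P^\top L P$ and aim for $\frac{1}{2} L \preceq GG^\top \preceq \frac{3}{2} L$. Letting $G^k$ and $L^k$ denote the accumulated factor and the Schur complement after step $k$, the sequence $\{G^k (G^k)^\top + L^k\}$ is a matrix martingale with first term $L$ and last term $GG^\top$, whose conditional expectation is preserved at every step. To convert this in-expectation identity into a two-sided spectral bound, precondition by $L^{+/2}$ and track
\[
Y_k = L^{+/2}\bigl(G^k (G^k)^\top + L^k\bigr)\,L^{+/2},
\]
a martingale starting at the orthogonal projector $\Pi = L^{+/2} L L^{+/2}$ onto the range of $L$ and ending at $L^{+/2}(GG^\top) L^{+/2}$. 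The target bound is exactly $\|Y_N - \Pi\| \le \frac{1}{2}$.

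\textbf{Spectral bound: concentration.} The tool is a matrix Freedman inequality, which needs a uniform bound $R$ on $\|Y_k - Y_{k-1}\|$ and a bound $\sigma^2$ on the operator norm of the predictable quadratic variation $\sum_k \mathbb{E}_{k-1}\bigl[(Y_k-Y_{k-1})^2\bigr]$. It is cleanest to refine the filtration to the level of individual edge samples inside the sampling scheme of~\cite{kyng2016approximate}, so that each increment is a single rank-one correction $L^{+/2}\bigl(w_e\,\bb_e\bb_e^\top - \mathbb{E}_{k-1}[\,\cdot\,]\bigr)L^{+/2}$ for an edge $e=(i,j)$ with incidence vector $\bb_e = \bb_{ij}$. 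The main obstacle --- and the substance of~\cite{kyng2016approximate} --- is showing $R = \bigO(1/\rho)$, i.e.\ that the $L^+$-normalized weight $w_e\,\bb_e^\top L^+ \bb_e$ of every edge appearing in the working multi-graph stays below $1/\rho$ throughout the elimination. This has two parts: (i) at initialization each split copy has normalized weight at most $1/\rho$, because the leverage score $w_e\,\bb_e^\top L^+\bb_e$ of any edge is at most one (conductance times effective resistance, which never exceeds one) and splitting scales it by $1/\rho$; and (ii) the bound is invariant under one elimination step, since both the Schur-complement update and the clique sampling of~\cite{kyng2016approximate} create edges whose normalized weights, measured in the fixed metric $L^+$, do not exceed those of the edges they combine. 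Granting this invariant, every increment has norm $\bigO(1/\rho)$, and a companion second-moment estimate --- relying on the same $\bigO(1/\rho)$ norm bound together with the boundedness of the accumulated normalized mass --- controls the predictable quadratic variation at $\sigma^2 = \bigO(1/\rho)$ as well. Feeding $R,\sigma^2 = \bigO(1/\rho)$ and $t=\frac{1}{2}$ into the Freedman tail $2N\exp\bigl(-\frac{t^2/2}{\sigma^2 + Rt/3}\bigr)$ gives failure probability $2N\exp(-\Omega(\rho))$, which for $\rho = \Theta(\log^2 N)$ is super-polynomially small in $N$; hence $\|Y_N - \Pi\| \le \frac{1}{2}$ with high probability, and conjugating by $P$ recovers the stated bound.
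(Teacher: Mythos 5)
The paper does not actually prove this theorem: it is imported verbatim from~\cite{kyng2016approximate}, with only a one-sentence gloss (that the normalized Laplacian of every multi-edge stays bounded by $1/\rho$ throughout the factorization). So your attempt must be judged against the source's argument, and its skeleton is faithful to it: the complexity claims do follow from splitting each edge into $\rho$ copies and rerunning the expected-degree calculation under the random elimination order induced by $P$ (with the minor caveat that the paper's Running Time and Storage theorem is proved for \cref{a:sample}, which draws $n-1$ edges so the multi-edge count strictly decreases, whereas the sampler of~\cite{kyng2016approximate} draws $n$ edges per elimination --- the same calculation goes through, but not ``verbatim''). Your spectral setup --- the martingale $Y_k$, matrix Freedman, and the deterministic invariant that every multi-edge satisfies $w_e\,\bb_e^\top L^+ \bb_e \le 1/\rho$, maintained via the metric property of effective resistance together with the harmonic-mean weight inequality --- is exactly the mechanism of the cited proof, and your increment bound $R = \bigO(1/\rho)$ is correct.

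The genuine gap is the variance step, $\sigma^2 = \bigO(1/\rho)$, which fails in two ways. First, the predictable quadratic variation cannot be bounded unconditionally: your ``accumulated normalized mass'' is a sum of normalized Laplacians of the working multi-graphs along the factorization, and these are controlled (by $\bigO(1)\cdot\Pi$ each) only while the martingale has stayed in the good region --- the argument in~\cite{kyng2016approximate} is therefore run on a \emph{stopped} martingale, an idea absent from your sketch. Second, even on the good event the accumulated mass is $\Theta(\log N)\cdot\Pi$, not $\bigO(1)\cdot\Pi$: under the random elimination order each split edge spawns a chain of descendant multi-edges of expected length $\Theta(\log N)$, so the correct bound is $\sigma^2 = \bigO(\log N/\rho)$. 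This lost logarithm is precisely why $\rho$ must be $\Theta(\log^2 N)$: Freedman then gives failure probability roughly $N\exp(-\Omega(\rho/\log N))$, which is small exactly when $\rho \gtrsim \log^2 N$. Your claimed $\sigma^2 = \bigO(1/\rho)$ would make $\rho = \Theta(\log N)$ suffice and the theorem would read $\bigO(M\log^2 N)$ rather than the stated $\bigO(M\log^3 N)$ --- an internal inconsistency with the very statement you are proving, which is the clearest signal that the second-moment estimate as you justified it cannot be right.
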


Overall, the algorithm in~\cite{kyng2016approximate} requires a more expensive factorization than \rchol{} (the extra $\log^2 N$ factor in the running time can be significant in practice), but it produces an approximation of better quality.


{Compared to~\cite{sampling}, \rchol{} computes a mathematically equivalent operator if the same elimination ordering is used. (\rchol{} by default uses the AMD ordering~\cite{amestoy1996approximate} in practice; see~\Cref{s:r1}.) Hence, our analysis for \rchol{} also applies to the method in~\cite{sampling}. While \rchol{} represents the output as an approximate Cholesky factorization, \cite{sampling} uses a row-operation representation. 
}

\section{Randomized preconditioner for SDD matrix} \label{s:sdd}

In this section, we consider an SDD linear system $Ax=b$, where $A$ is irreducible as defined in \cref{d:irreducible} but not a Laplacian matrix.
In \Cref{s:sddm}, we consider the case when $A$ is an SDDM matrix, which can be viewed as the sum of a Laplacian matrix and a non-negative diagonal matrix with at least one positive diagonal entry. In \Cref{s:bsdd}, we introduce bipartite SDD matrices, a subclass of SDD matrices containing positive off-diagonal entries but  can be converted to either a Laplacian matrix or an SDDM matrix through diagonal scaling.

When $A$ is either an SDDM matrix or a bipartite SDD matrix, we can compute an approximate Cholesky factorization of $A$ and use it as a preconditioner to solve for $x$. Otherwise, it is well-known in the literature~\cite{gremban1996combinatorial} that $x$ can be obtained through solving a twice larger linear system $\tilde{A} y = \tilde{b}$ \emph{in exact arithmetic}. In \Cref{s:sdd2}, we show how to retrieve an approximate solution $x$ that has the same relative residual as a given \emph{approximate solution} $y$ for the larger system.

\subsection{SDDM matrix} \label{s:sddm}

\begin{definition} 
Matrix $A  \in \mathbb{R}^{N \times N}$ is a {s}ymmetric {d}iagonally {d}ominant {M}-matrix if $A$ is (1) SDD, (2) positive definite, and (3) $a_{ij} \le 0$ when $i\not=j$.
\end{definition}

Our goal is to compute an approximate Cholesky factorization for an SDDM matrix $A$:
\begin{equation} \label{e:agg}
 A \approx G G^\top.
\end{equation}
The factorization can be used as  a preconditioner for solving $Ax=b$. To obtain \cref{e:agg}, our approach is  applying \cref{a:rchol} to the following extended matrix that initially appread in~\cite{gremban1996combinatorial}:
\begin{equation} \label{e:sddme}
\tilde{A}
\triangleq
\begin{pmatrix}
A & -A \bone \\
-\bone^\top A & \bone^\top A \bone
\end{pmatrix},
\quad \tilde{A} \in \mathbb{R}^{(N+1) \times (N+1)}
\end{equation}
where $\bone  \in \mathbb{R}^{N}$ stands for the all-ones vector. The reason we can apply \cref{a:rchol} is the following lemma.

\begin{lemma} \label{l:sddm}
Given an irreducible SDDM matrix $A$, the extended matrix $\tilde{A}$, defined in \cref{e:sddme}, is an irreducible Laplacian matrix.
\end{lemma}
\begin{proof}
Since $A$ is SDD and positive definite, the row-sum vector $A \bone$ has non-negative entries and at least one positive entry. Therefore, it is straightforward to verify that $\tilde{A}$ is an irreducible Laplacian matrix.
\end{proof}

Suppose the output of \cref{a:rchol} is the following:
\begin{equation} \label{e:gt}
\rchol(\tilde{A})
\triangleq \tilde{G}
=
\begin{pmatrix}
\tilde{G}_{11} &  \\
\tilde{G}_{21} & \tilde{g}_{22}
\end{pmatrix}
,
\end{equation}
where $\tilde{G}_{11} \in \mathbb{R}^{N \times N},  \tilde{G}_{21} \in \mathbb{R}^{1 \times N}$ and $\tilde{g}_{22} \in \mathbb{R}$. We know that $\tilde{g}_{22}=0$ according to \cref{th:stable}. In other words, we have the following approximation:
\[
\tilde{A}
=
\begin{pmatrix}
A & -A \bone \\
-\bone^\top A & \bone^\top A \bone
\end{pmatrix}
\approx
\tilde{G} \tilde{G}^\top
=
\begin{pmatrix}
\tilde{G}_{11}  &  \\
\tilde{G}_{21}  & 0
\end{pmatrix}
\begin{pmatrix}
\tilde{G}_{11} ^\top & \tilde{G}_{21} ^\top \\
 & 0
\end{pmatrix}
,
\]
from which we see that 
\begin{equation*} \label{e:agg1}
A \approx \tilde{G}_{11}  \tilde{G}_{11} ^\top
\end{equation*}
in the leading principle block. We summarize the above algorithm in \cref{a:sddm}.

\begin{algorithm}
\caption{Randomized Cholesky factorization for SDDM matrix}
\label{a:sddm}
\begin{algorithmic}[1]
\Require irreducible SDDM matrix $A \in \mathbb{R}^{N \times N}$
\Ensure lower triangular matrix $G \in \mathbb{R}^{N \times N}$ 
\State Construct $\tilde{A}$ defined in \cref{e:sddme}.
\State Compute
\[
\begin{pmatrix}
\tilde{G}_{11} &  \\
\tilde{G}_{21} & 0
\end{pmatrix}
=
\Call{RandomizedCholesky}{\tilde{A}}
\quad\quad {\text{\cyan{// call \cref{a:rchol}}}}
\]
where $\tilde{G}_{11} \in \mathbb{R}^{N \times N}$ and $\tilde{G}_{21} \in \mathbb{R}^{1 \times N}$.
\State \Return $G = \tilde{G}_{11}$.
\end{algorithmic}
\end{algorithm}

\begin{remark}[Reducible SDDM matrix] \label{r:sddm}
In general, \cref{a:sddm} can be applied to an SDDM matrix $A$ that is reducible because \cref{e:sddme} is still an irreducible Laplacian matrix. However, it may be more efficient to apply \cref{a:sddm} to each irreducible component for solving a linear system with $A$.
\end{remark}

Before ending this section, we justify using $\tilde{G}_{11}  \tilde{G}_{11}^\top$ as a preconditioner through the following classical result.

\begin{theorem}[{{\cite[Lemma 4.2, page 56]{gremban1996combinatorial}}}]
 \label{t:sddm}
Solving an irreducible SDDM linear system $Ax=b$ is equivalent to solving the following irreducible Laplacian linear system
\begin{equation} \label{e:ext1}
\tilde{A} \, y
=
\begin{pmatrix}
b \\
-\bone^\top b
\end{pmatrix}
\end{equation}
\end{theorem}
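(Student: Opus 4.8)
The plan is to compute the product $\tilde A y$ in block form and read off exactly what the extended equation \cref{e:ext1} forces on the unknown. I would write $y = \begin{pmatrix} y_1 \\ y_2 \end{pmatrix}$ with $y_1 \in \mathbb{R}^N$ and $y_2 \in \mathbb{R}$, and use the definition \cref{e:sddme} to obtain the top block $A y_1 - y_2 A\bone = A(y_1 - y_2\bone)$ and the bottom block $-\bone^\top A y_1 + y_2\,\bone^\top A\bone = -\bone^\top A(y_1 - y_2\bone)$. Matching these against the right-hand side of \cref{e:ext1} yields the two coupled equations $A(y_1 - y_2\bone) = b$ and $-\bone^\top A(y_1 - y_2\bone) = -\bone^\top b$.

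Because $\tilde A$ is a Laplacian matrix it is singular, so before the recovery step I would first confirm that \cref{e:ext1} is actually solvable. By \cref{l:sddm}, $\tilde A$ is an irreducible Laplacian; hence (cf. the remark after \cref{l:diag}) its null space is $\operatorname{span}\{\begin{pmatrix}\bone\\1\end{pmatrix}\}$ and its range is the orthogonal complement of that vector. The system is therefore consistent precisely when the right-hand side is orthogonal to $\begin{pmatrix}\bone\\1\end{pmatrix}$, which holds because its entries sum to $\bone^\top b - \bone^\top b = 0$.

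The key step is then transparent. The second equation is just $-\bone^\top$ times the first, so it is redundant and is satisfied automatically whenever the first holds. Since $A$ is SDDM and therefore positive definite (in particular nonsingular), the first equation is equivalent to $y_1 - y_2\bone = A^{-1}b = x$, where $x$ solves $Ax = b$. This gives a recovery map $x = y_1 - y_2\bone$ from any solution $y$ of \cref{e:ext1}, and conversely $\begin{pmatrix} x \\ 0 \end{pmatrix}$ solves \cref{e:ext1}; the full solution set is the coset $\{\begin{pmatrix} x + t\bone \\ t\end{pmatrix} : t\in\mathbb{R}\}$ of the one-dimensional null space of $\tilde A$.

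I expect the only real obstacle to be conceptual rather than computational: since $\tilde A$ is singular, the word ``equivalent'' must be pinned down carefully, and the argument must both guarantee existence of a solution to \cref{e:ext1} (the consistency check above) and supply a recovery formula that is well defined, i.e. constant along the null-space direction $\begin{pmatrix}\bone\\1\end{pmatrix}$. The map $x = y_1 - y_2\bone$ indeed has this invariance, so once consistency and invariance are in place the block computation closes the argument with no further difficulty.
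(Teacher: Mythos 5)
Your proposal is correct and follows the same route as the paper: the paper's proof simply asserts (as ``can be verified'') that the solution set of \cref{e:ext1} is $y = \begin{pmatrix} x \\ 0 \end{pmatrix} + \operatorname{span}\{\bone\}$, which is exactly the coset $\left\{\begin{pmatrix} x + t\bone \\ t \end{pmatrix} : t \in \mathbb{R}\right\}$ you derive. You merely supply the details the paper elides---the block computation, the consistency check against the null space of $\tilde{A}$, and the well-defined recovery map $x = y_1 - y_2\bone$---all of which are sound.
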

\begin{proof}
It can be verified the solution of \cref{e:ext1} is
\begin{equation} \label{e:solve}
y = 
\begin{pmatrix}
x \\
0
\end{pmatrix}
+
\text{span}\{\bone\}
.
\end{equation}
Therefore, we can solve \cref{e:ext1} to obtain $x$ and vice versa.
\end{proof}

To solve \cref{e:ext1} and obtain $x$, we first apply PCG with the preconditioner $\tilde{G} \tilde{G}^\top$ in \cref{e:gt}. Then, we orthogonalize the PCG solution with respect to $\text{span}\{\bone\}$. This process turns out to be equivalent to using $\tilde{G}_{11} \tilde{G}_{11}^\top$ as the preconditioner (note $\tilde{G}_{11}$ is non-singular) for solving $Ax=b$ with PCG directly, without going through the extended problem.

\subsection{SDD matrix} \label{sec:sdd}

Given an irreducible SDD matrix $A \in \mathbb{R}^{N \times N}$, let 
\[
A \triangleq A_d + A_n + A_p,
\]
where $A_d, A_n, A_p  \in \mathbb{R}^{N \times N}$ contain the diagonal, the negative off-diagonal and the positive off-diagonal entries of $A$, respectively. 
In this section, we focus on the case when $A_p \not  = \bm{0}$, i.e., $A$ contains at least two positive off-diagonal entries (due to symmetry).

\subsubsection{{Bipartite SDD matrix}} \label{s:bsdd}

We introduce bipartite SDD matrices and give three equivalent definitions below (proof is in \cref{s:b}).

\begin{definition} \label{d:bsdd} 
A bipartite SDD matrix $A$ can be defined in any of the following three equivalent ways:
\begin{enumerate} [label=(\alph*)]
\item 
Let $\hat{A}$ be an SDD matrix defined by the off-diagonal part of $A$:
\begin{equation} \label{e:hat}
\hat{A} \triangleq \diag \left( (A_p - A_n) \bone \right) + A_p + A_n,
\end{equation}
where $\diag(\cdot)$ maps a vector to a diagonal matrix. 
If $\mathrm{rank}(\hat{A}) = N-1$, then $A$ is a bipartite SDD matrix.

\item
Let $D$ be  a diagonal matrix, whose diagonal entries are ether 1 or -1. If there exists such a matrix $D$ that $DAD$ has only non-positive off-diagonal entries, then $A$ is a bipartite SDD matrix.

\item
Let $\G=(V,E)$ be a undirected graph, where $V=(v_1,v_2,\ldots,v_N)$ has $N$ vertices;  an edge $e_{ij} = (v_i,v_j) \in E$ exists if $a_{ij} \not = 0$ and carries weight $w_{ij} = - a_{ij}$.  If the graph $\G$ is 2-colorable (bipartite) in the following sense:
\begin{itemize}
\item
$v_i$ and $v_j$ have the same color if $w_{ij} > 0$;

\item
 $v_i$ and $v_j$ have different colors if $w_{ij} < 0$,
\end{itemize}
then $A$ is a bipartite SDD matrix.

\end{enumerate}
\end{definition}

\begin{example}
The following shows three $3\times 3$ SDD matrices with positive off-diagonal entries, where a symbol $\times$ denotes any value greater than or equal to 2. Among the three matrices, $A_1$ is a bipartite SDD matrix and the other two are not.
\[
A_1 =
\begin{pmatrix}
\times & 1 & 1  \\
1 & \times & -1  \\
1 & -1 & \times  \\
\end{pmatrix}
\quad
A_2 =
\begin{pmatrix}
\times & 1 & -1  \\
1 & \times & -1  \\
-1 & -1 & \times  \\
\end{pmatrix}
\quad
A_3 =
\begin{pmatrix}
\times & 1 & 1  \\
1 & \times & 1  \\
1 & 1 & \times  \\
\end{pmatrix}
\]
\end{example}

\begin{remark} \label{r:bsdd}
Whether $A$ is a bipartite SDD matrix or not depends on only its off-diagonal part according to \cref{d:bsdd} (a). When $A_p \not = \bm{0}$, we have $\mathrm{rank}(\hat{A}) = N$ if $A$ is not a bipartite SDD matrix. Otherwise, when $A_p = \bm{0}$ ($A$ is either a Laplacian matrix or an SDDM matrix), we have $\mathrm{rank}(\hat{A}) = N-1$.
\end{remark}

Our goal is to compute an approximate (generalized) Cholesky factorization of an irreducible bipartite SDD matrix. In the following, we show that it takes linear time to find the matrix $D$ in \cref{d:bsdd} (b), and thus we can apply \rchol{} to $DAD$, which is either a Laplacian matrix or an SDDM matrix. Given an irreducible SDD matrix, \cref{a:bipartite} tries to find the matrix $D$ by traversing the graph $\G$ defined in \cref{d:bsdd} (c). \cref{a:bipartite} is based on the breadth-first-search  and can also be implemented in the depth-first-search. With the matrix $D$, we obtain an approximate (generalized) Cholesky factorization $A \approx G G^\top$, where $G$ has both positive and negative diagonal entries.

\begin{algorithm}
\caption{Check bipartite SDD matrix}
\label{a:bipartite}
\begin{algorithmic}[1]
\Require irreducible  SDD matrix $A \in \mathbb{R}^{N \times N}$ (not necessarily bipartite)
\Ensure flag \texttt{BSDD\_or\_not} and  diagonal matrix  $D \in \mathbb{R}^{N \times N}$ (if $A$ is bipartite)

\State Let \texttt{BSDD\_or\_not} = \texttt{true} and  $d_{11} = 1$.
\State Mark index 1 as visited; and \texttt{queue}.push(1).

\While {\texttt{queue} is not empty}
\State $i$ = \texttt{queue}.pop()
\For {$k: a_{ik} \not = 0, k \not = i$}
\If {index $k$ has not been visited}
    \If {$a_{ik} < 0$}
    \State Let $d_{kk} = d_{ii}$.
    \Else
    \State Let $d_{kk} = -d_{ii}$.
    \EndIf
    \State Mark index $k$ as visited; and \texttt{queue}.push($k$).
\Else
    \If { $a_{ik} \, d_{kk} \, d_{ii} > 0$ } \hfill \Comment{\cyan{// see lines 7-11}}
    \State Let \texttt{BSDD\_or\_not} = \texttt{false} and \Return.
    \EndIf
\EndIf
\EndFor
\EndWhile
\end{algorithmic}
\end{algorithm}

\begin{algorithm}
\caption{Randomized  Cholesky factorization for bipartite SDD matrix}
\label{a:opt}
\begin{algorithmic}[1]
\Require irreducible bipartite SDD matrix $A$
\Ensure lower triangular matrix $G$ 

\State $D = \Call{CheckBipartiteSDDMatrix}{A}$
\State $\tilde{G} = \Call{RandomizedCholesky}{DAD}$
\hfill \Comment{\cyan{// \cref{a:rchol} or \cref{a:sddm}}}

\State $G = D \tilde{G}$
\hfill \Comment{\cyan{// $A \approx D \tilde{G} \tilde{G}^\top D$}}

\end{algorithmic}
\end{algorithm}

\subsubsection{General SDD matrix} \label{s:sdd2}

We consider solving $Ax=b$, where $A_p \not = \bm{0}$ and $A$ is not a bipartite SDD matrix ($A$ is non-singular according to \cref{r:bsdd}). Our goal is to find $x$ such that the relative residual is smaller than a prescribed tolerance $\epsilon$, i.e.,
\begin{equation} \label{e:x}
\|b - Ax\|/\| b \| < \epsilon,
\end{equation}
a common stopping criteria for iterative solvers such as PCG. 
Our approach is to solve the extended system $\tilde{A} y = \tilde{b}$ as initially proposed in~\cite{gremban1996combinatorial}, where
\begin{equation} \label{e:sdde}
\tilde{A}
\triangleq
\begin{pmatrix}
A_d+A_n & -A_p \\
-A_p & A_d+A_n
\end{pmatrix}
, \quad
\tilde{b} 
\triangleq 
\begin{pmatrix}
b \\
-b
\end{pmatrix},
\end{equation}
and we seek to find $y$ satisfying
\begin{equation} \label{e:ayb}
\| \tilde{b} - \tilde{A} y \|/\| \tilde{b} \| < \epsilon.
\end{equation}
Before discussing how to solve the extended system,  we state our main result in the following theorem.
\begin{theorem} \label{th:sdd}
Given 
$
y = 
\begin{pmatrix}
y_1 \\
-y_2
\end{pmatrix}
$
such that \cref{e:ayb} holds, 
where
$
y_1, y_2 \in \mathbb{R}^N.
$
The vector
\begin{equation} \label{e:exact}
x = \frac{y_1 + y_2}{2}
\end{equation}
satisfies \cref{e:x}.
\end{theorem}
\begin{proof}
According to \cref{e:ayb}, we have
\begin{align*}
\| \tilde{b} - \tilde{A} y \|^2
&=
\left\| 
\begin{pmatrix}
b-(A_d+A_n)y_1-A_py_2 \\
b-A_py_1-(A_d+A_n)y_2
\end{pmatrix}
\right\|^2
\\
&= 
\left\|  
b-(A_d+A_n)y_1-A_py_2
\right\|^2
+ 
\left\|  
b-A_py_1-(A_d+A_n)y_2
\right\|^2
\\
&< \epsilon^2 \| \tilde{b} \|^2, 
\end{align*}
where $\| \tilde{b} \|^2 = 2 \| b \|^2$. We obtain \cref{e:x} as follows:
\begin{align*}
\left\|
b-Ax
\right\|^2
&=
\frac{1}{4}
\|
2b - (A_d+A_n+A_p)(y_1+y_2)
\|^2
\\
&=
\frac{1}{4}
\|
\left( b-(A_d+A_n)y_1-A_py_2 \right)
+
\left( b-A_py_1-(A_d+A_n)y_2 \right)
\|^2
\\
& \le \frac{1}{2} \|
\left( b-(A_d+A_n)y_1-A_py_2 \right)
\|^2
+
\frac{1}{2} \| 
\left( b-A_py_1-(A_d+A_n)y_2 \right)
\|^2 \\
&< 
\epsilon^2 \| b \|^2.
\end{align*}
\end{proof}

A similar result on the relative errors also holds~\cite{spielman2014nearly}: 
\[
\left\| y - \tilde{A}^\dagger \tilde{b} \right\| \le  \epsilon \left\| \tilde{A}^\dagger \tilde{b} \right\| \quad \text{implies} \quad \left\| x - {A}^{-1} {b} \right\| \le  \epsilon \left\| {A}^{-1} {b} \right\|,
\]
where $\tilde{A}^\dagger$ denotes the pseudo-inverse of $\tilde{A}$. ($\tilde{A}$ may be singular, i.e., a Laplacian matrix.) In addition, if we seek for the exact solution, i.e., $\epsilon = 0$, then \cref{e:exact} is indeed the solution of $Ax=b$~\cite{maggs2005finding,spielman2014nearly}.

Next, we focus on solving the extended system $\tilde{A} y = \tilde{b}$. It is easy to see that 
$\tilde{A}$ is an SDD matrix with non-positive off-diagonal entries, i.e., a Laplacian matrix or an SDDM matrix.
In addition, $\tilde{A}$ is irreducible as the following theorem states (proof is in \cref{s:c}):
\begin{theorem} \label{th:irreducible}
If an irreducible SDD matrix $A$ contains positive off-diagonal entries ($A_p \not = \bm{0}$)  and is not a bipartite SDD matrix, then the matrix $\tilde{A}$ defined in \cref{e:sdde} is irreducible.
\end{theorem}


Therefore, we can construct an approximate Cholesky factorization of $\tilde{A}$, solve the extended system with PCG and obtain $x$ according to \cref{th:sdd}.
 To summarize, \cref{a:sdd} shows the pseudocode of solving a general irreducible SDD linear system. 

\begin{algorithm}
\caption{General SDD linear solver}
\label{a:sdd}
\begin{algorithmic}[1]
\Require irreducible SDD matrix $A \in \mathbb{R}^{N \times N}$, right-hand side $b \in \mathbb{R}^{N}$ and tolerance $\epsilon$
\Ensure $x \in \mathbb{R}^{N}$ satisfying \cref{e:x}.
\State Construct $\tilde{A}$ and $\tilde{b}$ as defined in \cref{e:sdde}.

\State Compute
\[
\tilde{G} = \Call{RandomizedCholesky}{\tilde{A}}.
\]
\hfill \Comment{\cyan{// \cref{a:rchol} or \cref{a:sddm}}}

\State Compute
\[
\begin{pmatrix}
x_1 \\
-x_2
\end{pmatrix}
= 
\Call{PCG}{\tilde{A}, \, \tilde{b}, \, \epsilon, \tilde{G}, \tilde{G}^\top},
\quad x_1,x_2 \in \mathbb{R}^{N}.
\]
\hfill \Comment{\cyan{// PCG with preconditioner $\tilde{G} \tilde{G}^\top$}}

\State \Return $x = (x_1+x_2)/2$.
\end{algorithmic}
\end{algorithm}

\section{Sparse matrix reordering and parallel algorithm} \label{s:parallel}

In this section, we discuss two techniques for improving the practical performance of \cref{a:rchol} including reordering the input sparse matrix and parallelizing the computation.

Sparse matrix reordering is a mature technique that is used in sparse direct solvers to speed up factorization and to reduce the memory footprint. {Since \cref{a:rchol} keeps a subset of fill-in at every step, it is intuitive that \cref{a:rchol} can also benefit from an appropriate ordering. The challenge, however, is that the fill-in pattern as a result of the random sampling algorithm is not deterministic and thus is impossible to predict beforehand. We resort to using the approximate minimum degree (AMD) ordering~\cite{amestoy1996approximate}, a fill-in reducing heuristic for the  (exact) Cholesky factorization. The advantage is that the AMD can be precomputed quickly and applied to the input sparse matrix before \cref{a:rchol}. In practice, we find the AMD working well with \rchol{}, although the fill-in behavior of \cref{a:rchol} is quite different from that of the  (exact) Cholesky factorization. We present comparisons between the AMD and other popular reordering strategies used in sparse direct solvers in  \Cref{s:r1} .}

Next, we introduce a parallel algorithm for \cref{a:rchol} based on the nested dissection scheme~\cite{george1973nested}. Consider the underlying graph associated with  a given sparse matrix. If we split it into two disconnected components separated by a vertex separator, then we can apply \cref{a:rchol} on the two disconnected pieces using two threads in parallel. When more than two threads are available, we apply the same partitioning recursively on the two independent partitions to obtain more disconnected parts of the graph; see \cref{f:nd} (left) for a pictorial illustration. Technically, the above procedure is known as the nested dissection and can be computed algebraically using METIS/ParMETIS~\cite{georgeMetis,karypis1997parmetis}. Moreover, we employ the AMD ordering within each independent region at the leaf level. The pseudocode of our ordering strategy is shown in \cref{a:order}, which can be parallelized in a straightforward way.

\begin{figure}
\begin{center}
\scalebox{0.22}{\includegraphics{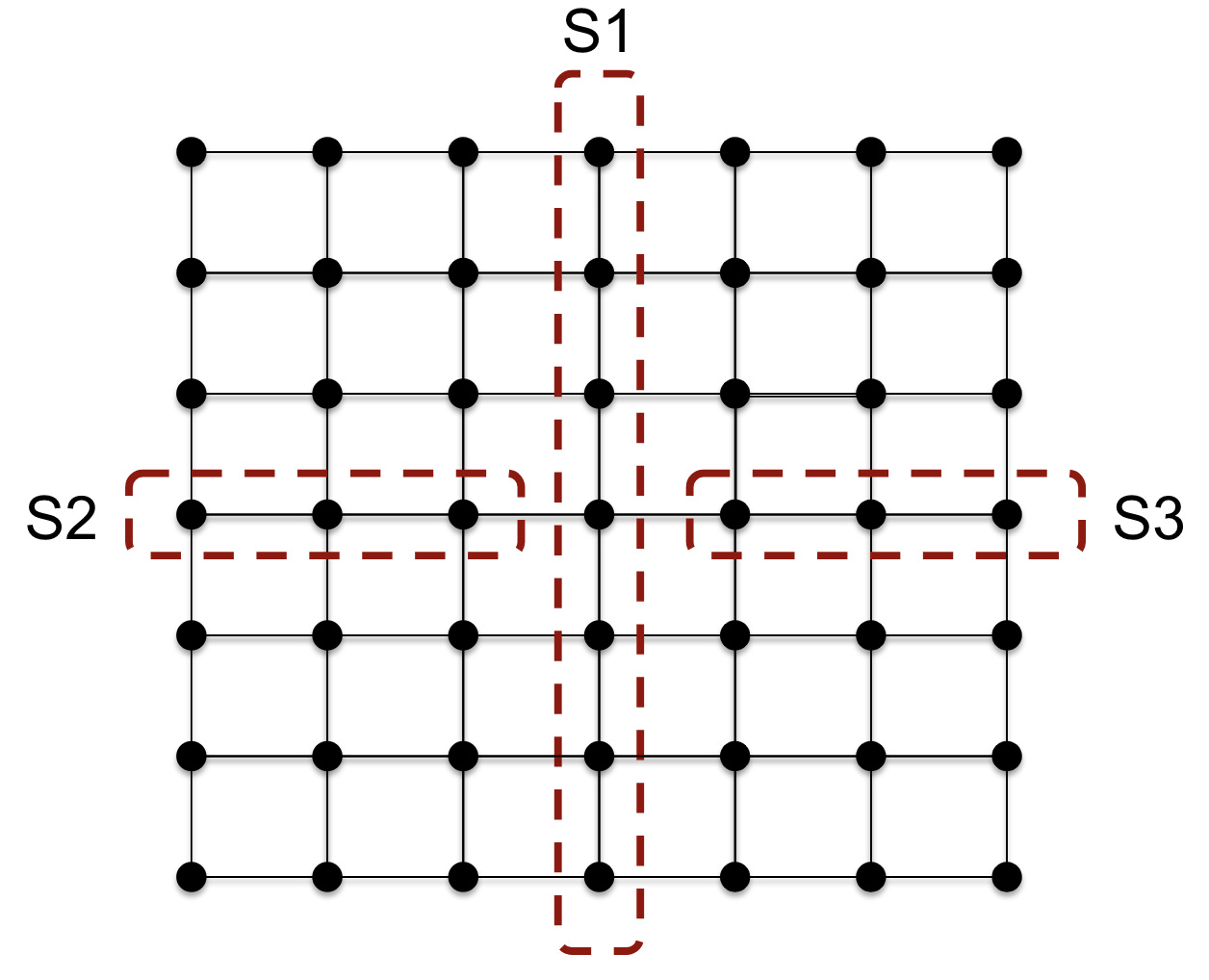}}
\hspace{1cm}
\scalebox{0.16}{\includegraphics{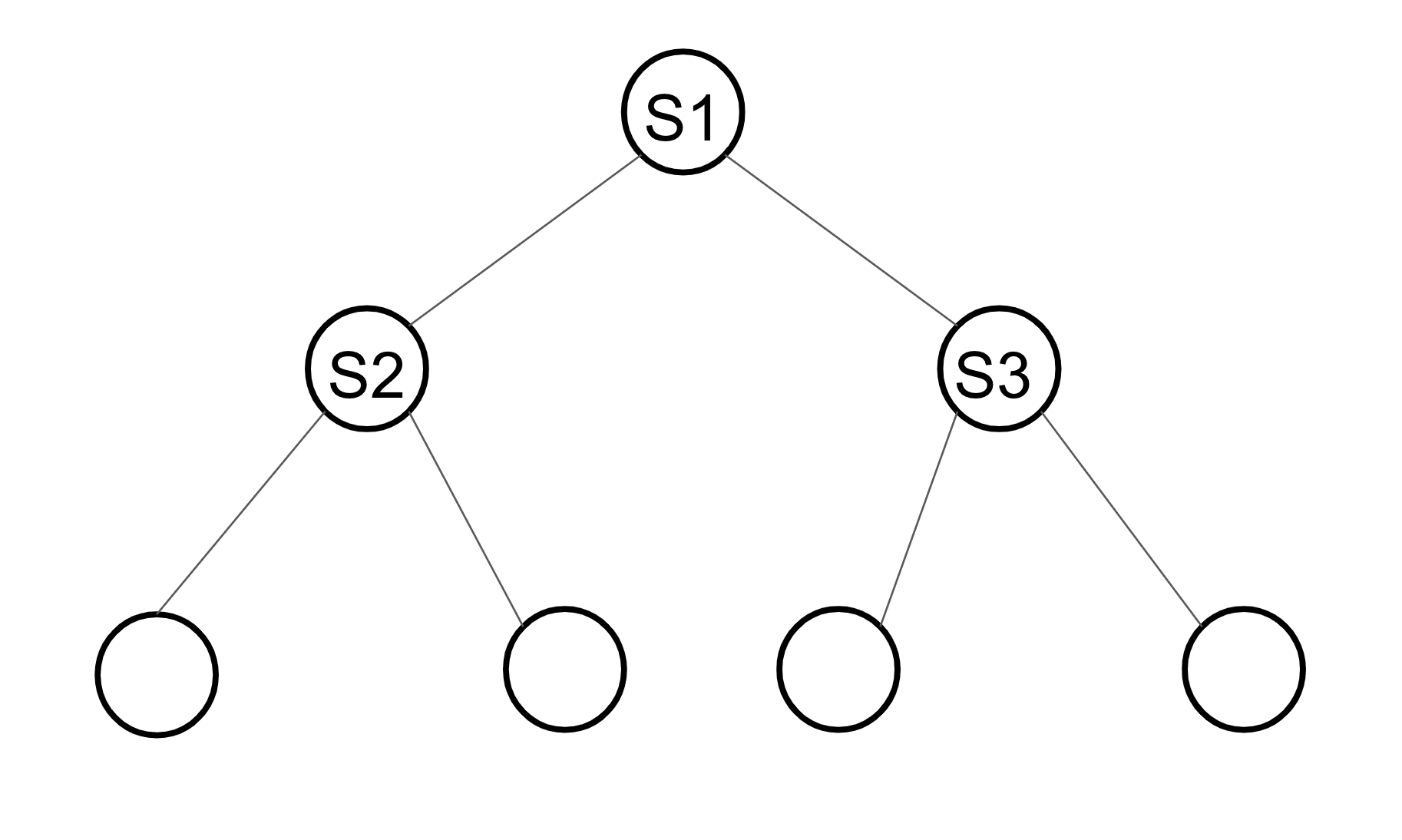}
}
\caption{(Left) an example of the graph of $A$ and its nested dissection partitioning. S1 is the top {separator}, S2 and S3 are two decoupled {separators} at the second level, and the remaining four parts are decoupled from each other. (Right) nested-dissection tree and task graph. {Task dependency}: every node {depends} on its children (if exist) and some descendants, and nodes at the same level can execute {in parallel}.}
\label{f:nd}
\end{center}
\end{figure}

\begin{algorithm}
\caption{Compute ordering}
\label{a:order}
\begin{algorithmic}[1]
\Require irreducible Laplacian matrix $L \in \mathbb{R}^{N \times N}$ and number of threads $p$
\Ensure the nested-dissection tree $\mathcal{T}$
\Statex
\State $\ell = \log_2(p)$
\hfill \Comment{\cyan{// assume $p$ is a power of 2}}
\State Create a full binary tree $\mathcal{T}$ of $\ell$ levels
\hfill \Comment{\cyan{// initialize output}}
\State \Call{ComputeOrdering}{$\mathcal{T}\rightarrow$root, $L$, $\ell$}
\hfill \Comment{\cyan{// start recursion}}
\Statex
\Function{ComputeOrdering}{node, $L$, $\ell$}
    \If{$\ell > 0$}
    \Statex \Comment{\quad\quad\quad \cyan{// partition graph/indices into ``left'', ``right'', and ``seperator''}}
    \State  $\mathcal{I}_l, \, \mathcal{I}_r, \, \mathcal{I}_s$ = \Call{PartitionGraph}{$L$}
    \hfill \Comment{\cyan{// call METIS}}
    \State node$\rightarrow$store\_indices($\mathcal{I}_s$)
    \State \Call{ComputeOrdering}{node$\rightarrow$left, $L(\mathcal{I}_l, \mathcal{I}_l)$, $\ell-1$}
    \State \Call{ComputeOrdering}{node$\rightarrow$right, $L(\mathcal{I}_r, \mathcal{I}_r)$, $\ell-1$}
    \Else
    \State $\mathcal{I}$ = \Call{ComputeAMD}{$L$}
    \hfill \Comment{\cyan{// AMD ordering at leaf level}}
    \State node$\rightarrow$store\_indices($\mathcal{I}$)
    \EndIf 
\EndFunction
\end{algorithmic}
\end{algorithm}

The nested dissection partitioning is naturally associated with a tree structure, where leaf nodes correspond to disconnected regions and the other nodes correspond to separators at different levels; see \cref{f:nd} (right). This tree maps to the task graph of a parallel algorithm: every tree node/task stands for applying \cref{a:rchol} to associated rows/columns in the sparse matrix. It is obvious that tasks at the same level can execute in parallel. Notice a task depends on not only  its children but also some of their descendants. We employ a multi-frontal type approach~\cite{liu1992multifrontal} in our parallel algorithm, where a task receives the Schur complement updates from  its two children and sends necessary updates to  its parent. In other words, a task communicates  with only its children and parent. The pseudocode is shown in \cref{a:parallel}, where we traverse the task tree in post order to generate all tasks.

We have implemented \cref{a:parallel} with both OpenMP\footnote{\url{https://www.openmp.org/}} tasks and the C++ thread library\footnote{\url{https://en.cppreference.com/w/cpp/thread}}, and we found the latter delivered slightly better performance in our numerical tests. Specifically, we use \texttt{std::async} to launch an asynchronous task at Line~\ref{l:task} on a new thread and store the results in an \texttt{std::future} object. Synchronization is achieved by calling the \texttt{get()} method on the previous \texttt{future} object at Line~\ref{l:sync}. One advantage of our approach is that we are able to pin threads on cores for locality via \texttt{sched\_setaffinity()} in \texttt{sched.h}.


\begin{algorithm} 
\caption{Parallel randomized Cholesky factorization}
\label{a:parallel}
\begin{algorithmic}[1]
\Require irreducible Laplacian matrix $L \in \mathbb{R}^{N \times N}$ and the nested-dissection tree $\mathcal{T}$
\Ensure matrix $G \in \mathbb{R}^{N \times N}$ (lower triangular if reordered according to $\mathcal{T}$)
\Statex
\State \Call{ParRchol}{$\mathcal{T}\rightarrow$root, $L$, $G$}
\hfill \Comment{\cyan{// start recursion; $L$ and $G$ modified in place}}
\Statex
\Function{ParRchol}{node, $L$, $G$}
\hfill \Comment{\cyan{// post-order tree traversal}}
    \If{node$\rightarrow$not\_leaf()}
    \Statex \Comment{\quad\quad\quad \cyan{// recursive task generation}}
    \State $S_l$ = \Call{ParRchol}{node$\rightarrow$left, $L$, $G$} \label{l:task}
    \State $S_r$ = \Call{ParRchol}{node$\rightarrow$right, $L$, $G$}
    \EndIf
    \State \Comment{\cyan{// wait until child tasks finish}} \label{l:sync}
    \State $L = L + S_l + S_r$ 
    \hfill \Comment{\cyan{// merge updates from children (reduction)}}
    \State $\mathcal{I}$ = node$\rightarrow$get\_indices()
    \State $S$ = \Call{RcholBlock}{$\mathcal{I}$, $L$, $G$}
    \hfill \Comment{\cyan{// apply \rchol{} to a block of indices}}
    \State \Return $S$
\EndFunction
\Statex
\Function{RcholBlock}{$\mathcal{I}$, $L$, $G$}
    \State $S = \bm{0}_{N \times N}$
    \For {$k \in \mathcal{I}$}
    \State \Comment \cyan{// $\ell_{kk} = 0$ at the last index in the top separator according to \cref{th:stable}}
    \State $ G(:,k) = \left\{ \begin{array}{llr} 
    		L(:,k)/\sqrt{\ell_{kk}} & \ell_{kk} \not = 0 \\
		\bm{0} & \ell_{kk} = 0
    		 \end{array} \right. $
    \State $C = \Call{SampleClique}{L, k}$
    \State $C_1, C_2 = \Call{SeparateEdges}{\mathcal{I}, C}$
    \hfill \Comment \cyan{// $C_1+C_2=C$}
    \State $L = L - L^{(k)} + C_1$
    \State $S = S + C_2$
    \hfill \Comment \cyan{// cumulate updates and send to parent}
    \EndFor
    \State \Return $S$
\EndFunction
\Statex
\Function{SeparateEdges}{$\mathcal{I}$, $C$}
    \State $C_1 = \bm{0}_{N \times N}, C_2 = \bm{0}_{N \times N}$
    \Statex \Comment \quad\, \cyan{// suppose $C = \sum_{e_{ij} \in E} w_{ij} \, \bb_{ij} \bb_{ij}^\top$ since $C$ is a graph Laplacian}
    \For {$e_{ij} \in E$}
    \If {$i \in \mathcal{I}$ \textbf{or} $j \in \mathcal{I}$}
    \State $C_1 = C_1 + w_{ij} \, \bb_{ij} \bb_{ij}^\top$
    \hfill \Comment \cyan{// needed by the current node}
    \Else
    \State $C_2 = C_2 + w_{ij} \, \bb_{ij} \bb_{ij}^\top$
    \hfill \Comment \cyan{// needed by ancestors}
    \EndIf
    \EndFor
    \State \Return $C_1, C_2$
\EndFunction
\end{algorithmic}
\end{algorithm}

%

\section{Numerical Results} \label{s:result}

In this section, we refer to our randomized preconditioner  as \rchol. Recall our goal is solving $Ax=b$, and our approach is constructing a preconditioner $GG^\top$, where $G$ is a lower triangular matrix.


Besides problems from the SuiteSparse Matrix Collection, we generate test matrices from discretizing Poisson's equation, variable-coefficient Poisson's equation, and anisotropic Poisson's equation:
\begin{equation} \label{e:poisson}
- \nabla \cdot ( a(x) \, \nabla u(x)) = f, \quad x \in \Omega=[0,1]^3, \quad u(x)=0 \text{ on } \partial \Omega.
\end{equation}
\begin{itemize}
\item
Poisson's equation:  $a(x) = 1$. 

\item
Variable-coefficient Poisson's (VC-Poisson) equation: we generate a high-contrast coefficient field $a(x)$ following~\cite{ho2016hierarchical,chen2018distributed,cambier2020algebraic}.  First, we generate $\{a_i\}$ from standard uniform distribution on a regular grid and compute the median $\mu$. Then, we convolve $\{a_i\}$ with an isotropic Gaussian of width $4h$, where $h$ is the grid spacing. Last, we quantize $\{a_i\}$ by setting
\begin{equation} \label{e:a}
a_i = \left\{
\begin{array}{cc}
 \rho^{1/2}, & \text{if } a_i \ge \mu, \\
 \rho^{-1/2}, & \text{if } a_i < \mu. \\
\end{array}
\right.
\end{equation}
See \cref{s:e} for an example of the random coefficients.

\item
anisotropic Poisson's (Aniso-Poisson) equation: {$a(x) = \diag(\delta^{1/2}, 1, \delta^{-1/2})$, where the coefficients are constant along each dimension.}

\end{itemize}
{In particular, we discretize the above elliptic PDE using standard 7-point finite difference stencil over a uniform $n \times n \times n$ grid. Let $h=1/n$, $x_j = h(j_1, j_2, j_3)$, where $j$ is the index of the triplet $(j_1, j_2, j_3)$ for $1\le j_1, j_2, j_3 \le n$. The discretized PDE reads:
\begin{align*}
(a_{j-e_1/2} + a_{j+e_1/2} 
+ a_{j-e_2/2} + a_{j+e_2/2} 
+ a_{j-e_3/2} + a_{j+e_3/2} ) u_j \\
- a_{j-e_1/2} u_{j-e_1} + a_{j+e_1/2} u_{j+e_1} 
- a_{j-e_2/2} u_{j-e_2} + a_{j+e_2/2} u_{j+e_2} \\
- a_{j-e_3/2} u_{j-e_3} + a_{j+e_3/2} u_{j+e_3} 
 & = h^2 f_j,
\end{align*}
where $e_1=(1,0,0), e_2=(0,1,0), e_3=(0,0,1)$, and $u_j \approx u(x_j)$ is to be solved.}

Experiments were performed on a node from Frontera\footnote{\url{https://frontera-portal.tacc.utexas.edu/user-guide/}}. Results  in \cref{s:r1} and \cref{s:r2} were obtained using a single thread on an Intel Xeon Platinum 8280 (``Cascade Lake"), and results in \cref{s:r3}  were obtained using multiple threads/cores on  an Intel Xeon Platinum 8280M.
Below are the notations we use to report results (all timing results are in seconds):

\begin{itemize}
    \item $N$: matrix size of $A$.
    \item $p$: number of threads/cores.
    \item nnz: number of non-zeros in $A$.
    \item fill: \emph{twice} the number of non-zeros in $G$.
    \item $t_p$: time for computing a permutation/reordering for $A$.
    \item $t_f$: time for computing the factorization/preconditioner.
    \item $t_s$: \emph{total PCG time}  for solving a standard-uniform random $b$.
    \item $n_{it}$: number of the PCG iterations with tolerance \num{1e-10}. In cases where PCG stagnated before convergence, we report the iteration number to stagnation and the corresponding relative residual (relres) $\|b-Ax\|_2/\|b\|_2$.
\end{itemize}

\subsection{Reordering and Stability} \label{s:r1}

We present results for five commonly-used reordering strategies in \cref{t:order}. The test problem is the standard 7-point finite-difference discretization of Poisson's equation in a unit cube with the Dirichlet boundary condition. {We have also tested the five strategies on other problems including VC-Poisson, Aniso-Poisson, and problems from SuiteSparse Matrix Collection (see \Cref{s:r_ufl}), and the following observations generally apply.}
\begin{enumerate}
\item
natural ordering (a.k.a., lexicographic ordering)/no reordering leads to significant amount of fill-in. Although PCG required a small number of iterations, the total solve-time is significant with a relatively dense preconditioner.
\item
reverse Cuthill-McKee ordering aims at a small bandwidth of the reordered matrix, which helps reduce fill-in for some applications. But results showed that it is was not effective for \rchol{}.
\item
random ordering as suggested in~\cite{kyng2016approximate} is effective in fill-in reduction. However, it results in widely scattered sparsity pattern in the triangular factor as shown in \cref{f:sparsity}, hampering practical performance of triangular solves at every iteration.
\item
nested dissection (ND) ordering is effective in fill-in reduction but requires significant time to compute.
\item
approximate minimum degree (AMD) ordering~\cite{amestoy1996approximate} is also effective in fill-in reduction and can be computed quickly. {The fill-in pattern of \rchol{} is not deterministic and is different from the (exact) Cholesky factorization.  Although the AMD is designed as a greedy strategy for minimizing the fill-in of the (exact) Cholesky factorization, it also performs well when used with \rchol{}. Among the five reordering strategies considered here, the AMD leads to the minimum running time consistently for all of our test problems, so we use the AMD by default.}
\end{enumerate}


Although \rchol{} uses randomness in the algorithm, the resulting preconditioner delivers extremely consistent performance as \cref{t:trial} shows.

\begin{table}
    \centering 
    \caption{\em Sparse matrix reordering. The matrix is from discretizing Poisson's equation on a 3D regular grid of size $256^3$ using standard 7-point finite difference. The orderings are computed using Matlab commands in the parentheses.}
    \label{t:order}
    \begin{tabular}{cccccc} 
    \toprule
    Ordering & fill/nnz & $t_p$ & $t_f$ & $t_s$ & $n_{it}$ \\
    \midrule 
    no reordering & 10.2 & 0 & 139 & 173 & 39 \\
    reverse Cuthill-McKee (symrcm) & 7.9 & 5 & 97 & 138 & 41 \\
    random ordering (randperm) & 3.3 & 0.8 & 76 & 362 & 55 \\
    nested dissection (dissect) & 3.3 & 206 & 66 &  132 & 65   \\
    \rowcolor{Gray}
    approximate minimum degree (amd) & 3.5 & 38 & 50 & 126 & 60   \\
    \bottomrule
    \end{tabular}
\end{table}

\begin{figure}
\begin{subfigure}{.45\textwidth}
  \centering
  \includegraphics[width=.4\linewidth]{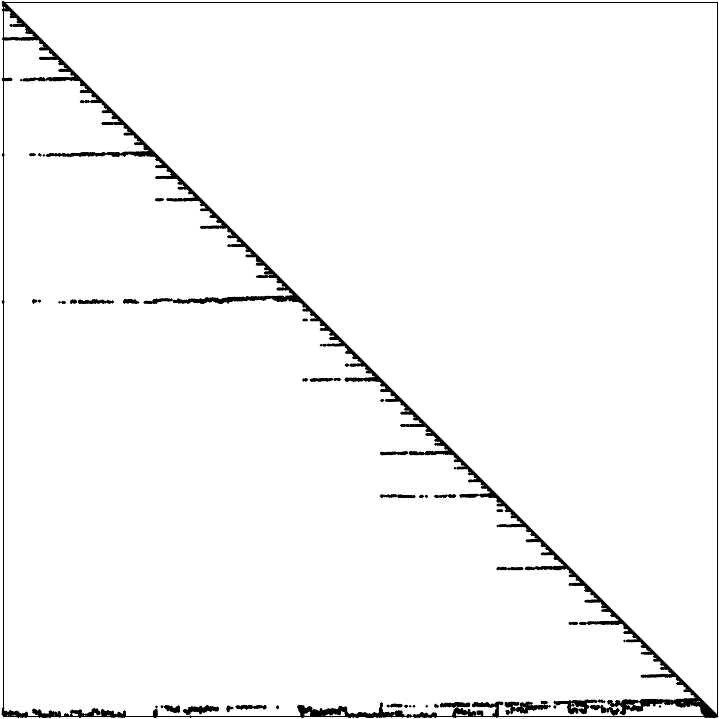}  
  \caption{AMD reordering: \num{2.1e+8} non-zeros} 
  \label{fig:sub-first}
\end{subfigure}
\begin{subfigure}{.45\textwidth}
  \centering
  \includegraphics[width=.4\linewidth]{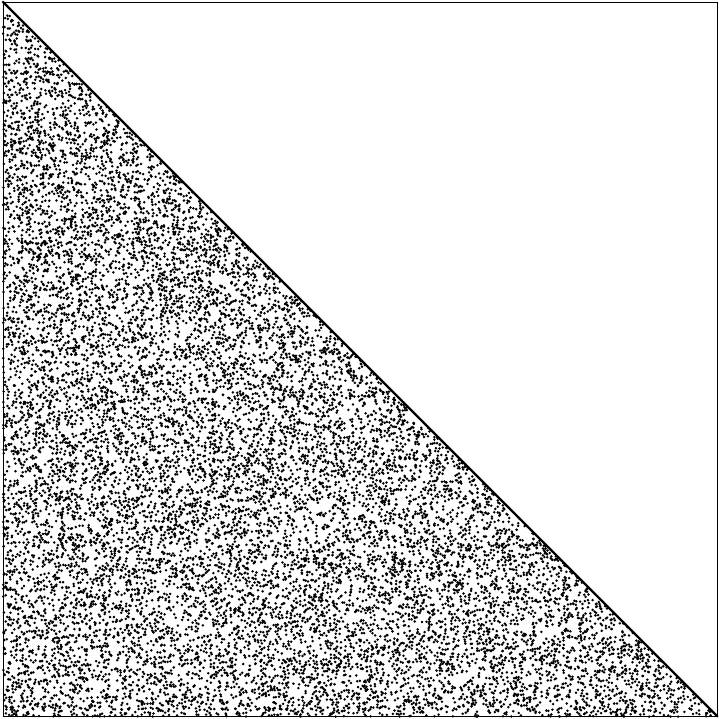}  
  \caption{random reordering: 1.9e+8 non-zeros} 
  \label{f:nnz_random}
\end{subfigure}
\caption{Sparsity pattern of triangular factors computed by \rchol{} corresponding to the AMD ordering and the random ordering in \cref{t:order}, respectively. (The full spy plot for random ordering is quite large, and (b) corresponds to the leading principle submatrix of size 3e+5.)}
\label{f:sparsity}
\end{figure}

    
\begin{table}
    \centering 
    \caption{\em Variance of \rchol{} (minimums and maximums among 10 independent trials). The matrices are from discretizing Poisson, VC-Poisson ($\rho=\num{1e+05}$) and Aniso-Poisson ($\delta=\num{1e+04}$) on a 3D regular grid of size $256^3$ using standard 7-point finite difference. (PCG tolerance is \num{1e-06} for VC-Poisson, a highly ill-conditioned problem; see \Cref{s:vc_poisson}.)}
    \label{t:trial}
    \begin{tabular}{cgccg} 
    \toprule
    Ordering & fill/nnz & $t_f$ & $t_s$ & $n_{it}$  \\
    \midrule 
    Poisson  
	& 3.538 - 3.542 & 48 - 54 & 117 - 128 & 57 - 62 \\
    VC-Poisson 
        & 4.074 - 4.078 & 56 - 65 & 257 - 303 & 120 - 141 \\
    Aniso-Poisson     
        & 2.556 - 2.557 & 38 - 43 & 79 - 80 & 44 - 44 \\
    \bottomrule
    \end{tabular}
\end{table}

\subsection{Comparison with incomplete Cholesky} \label{s:r2}

We compare \rchol{} to the incomplete Cholesky preconditioner with thresholding dropping (\ichol{}) in MATLAB\textsuperscript{\textregistered} R2020a. 
In particular, we manually tuned the drop tolerance in \ichol{} to obtain  preconditioners with slightly more fill-in. For both preconditioners, the construction time is usually much smaller than the time spent in PCG. For every PCG iteration, we expect similar running time because both preconditioners have approximately the same amount of fill-in. Therefore, the performance depends mostly on the numbers of PCG iterations. We used the AMD ordering in \rchol{}. Based on our experiments, \ichol{} performed better without any reordering, which is consistent with  empirical results observed in the literature~\cite{duff1989effect}.

\subsubsection{Matrices from SuiteSparse Matrix Collection} \label{s:r_ufl}

We first compare \rchol{} with \ichol{} on four SPD matrices from SuiteSparse Matrix Collection\footnote{\url{https://sparse.tamu.edu/}} that are not necessarily SDD. {The first is an SDDM matrix, the second a SDD matrix, and the last two SPD (but not SDD) matrices. All matrices have only negative off-diagonal entries except for the second matrix.} The second matrix is SDD but approximately a third of the off-diagonal entries are as small as \num{3.2e-7}. Since these entries are quite small relative to the remaining entries, we simply ignored these positives when applying \rchol{}. The last two matrices are not SDD, and some of the diagonals are smaller than the sum of the absolute value of off-diagonals. But we were able to run \rchol{} in a ``black-box'' fashion, which is equivalent to adding diagonal compensations to make the original matrix SDD.


Without any preconditioner, CG converged extremely slowly as shown in \cref{t:ufl1}. 
As \cref{t:ufl2} shows, although the highly-optimized \ichol{} (in MATLAB) delivers faster factorization than our implementation of \rchol{}, the \rchol-PCG took much less time than the \ichol-PCG due to significantly less iterations. In particular, PCG took about $9\times$ more iterations with \ichol{} for ``ecology2''. For all cases with \ichol{}, PCG stagnated before the $\num{1e-10}$ tolerance was reached. With \rchol{}, the relative residuals decreased to below $\num{1e-10}$ for the second and the last problems. {We also tested \ichol{} with no fill-in, and the total time were greater than those in \cref{t:ufl2}.}

\begin{table}
    \centering 
    \caption{\em SPD matrices from SuiteSparse Matrix Collection. {With \emph{no preconditioner}, CG converged extremely slow, and the relative residuals were still quite large after 2500 iterations except for the second problem.}}
    \label{t:ufl1}
    \begin{tabular}{cccccgc} 
    \toprule
    & Name & $N$ & nnz & Property & $n_{it}$ & relres \\
    \midrule 
    \# 1 & ecology2 & 1.0e+5 & 5.0e+6 & SDDM & 2500 & {1e-01} \\
    \# 2 & parabolic\_fem & 5.3e+5 & 3.7e+6 & SDD & 2500 & {2e-07} \\
    \# 3 & apache2 & 7.2e+5 & 4.8e+6 & not SDD & 2500 & {1e-02} \\
    \# 4 & G3\_circuit & 1.6e+6 & 7.7e+6 & not SDD & 2500 & {5e-01} \\
    \bottomrule
    \end{tabular}
    \caption{\em Comparison between \rchol{} preconditioner and \ichol{} preconditioner on matrices from SuiteSparse Matrix Collection. AMD ordering is applied with \rchol{}. Based on our experiments, the vanilla \ichol{} preconditioner without any reordering performs slightly better than with a reordering.}
    \label{t:ufl2}
    \begin{tabular}{l|ccccgc|cccgc} 
    \toprule
    \multirow{2}{*}{}
    & \multicolumn{6}{c|}{\rchol{}} & \multicolumn{5}{c}{\ichol{}} \\
    & fill/nnz & $t_p$ & $t_f$ & $t_s$ & $n_{it}$ & relres & fill/nnz & $t_f$ & $t_s$ & $n_{it}$  & relres \\ 
    \midrule 
    \# 1 & 2.41 & 0.4 & 1.4 & 6.3 & 89 & 1e-08 & 2.72 & 0.2 & 68 & 798 & 3e-08 \\
    \# 2 & 2.27 & 0.4 & 1.0 & 2.8 & 65 & 8e-11 & 2.29 & 0.2 & 15 & 411 & 2e-10 \\
    \# 3 & 2.93 & 0.6 & 1.5 & 4.1 & 63 & 3e-10 & 2.96 & 0.2 & 18 & 322 & 4e-10 \\
    \# 4 & 2.68 & 1.5 & 2.8 & 9.6 & 90 & 9e-11 & 2.75 & 0.3 & 40 & 379 & 2e-10 \\
    \bottomrule
    \end{tabular}
\end{table}

\subsubsection{Variable-coefficient Poisson's equation} \label{s:vc_poisson}

%

{We compare the \rchol{} preconditioner with the \ichol{} preconditioner on a sequence of SDDM matrices that become gradually more ill-conditioned.}
The discretization of VC-Poisson on a regular grid using the standard 7-point finite-difference stencil has a condition number $\mathcal{O}(\rho N^{2/3})$. 

The results are similar to above, where \ichol{} required at least twice as many iterations. As a result, the total time taken with the \rchol{} preconditioner is much less than with the \ichol{} preconditioner in all cases. In \cref{t:poisson}, when the condition number is large, PCG stoped  progressing  before reaching the tolerance \num{1e-10}. Consequently, the relative residual with the solution returned from PCG decreased from approximately \num{1e-11} to approximately \num{1e-8} as $\rho$ increases from \num{1e+0} to \num{1e+5}. Both  preconditioners suffer from this performance deterioration.

\begin{table}
    \centering 
    \caption{\em Comparison between \rchol{} preconditioner and \ichol{} preconditioner on matrices from discretizing variable-coefficient Poisson's equation on a regular grid of size $128^3$ using standard 7-point finite difference ($N$=\num{2.0e+06}, nnz=\num{1.4e+07}). The coefficients have contrast ratio $\rho$; see \cref{e:a}. 
    When $\rho \ge \num{1e+3}$, PCG stagnated before reaching tolerance $\num{1e-10}$.}
    \label{t:poisson}
    \begin{tabular}{c|ccccgH|cccgH} 
    \toprule
    \multirow{2}{*}{$\rho$}
    & \multicolumn{6}{c|}{\rchol{}} & \multicolumn{5}{c}{\ichol{}} \\
    & fill/nnz & $t_p$ & $t_f$ & $t_s$ & $n_{it}$ & relres & fill/nnz & $t_f$ & $t_s$ & $n_{it}$ & relres \\ 
    \midrule
    1e+0 & 3.23 & 3.8 & 5.3 & 12 & 51   & 9e-11 & 3.40 & 0.7 & 21 & 102 & 9e-11 \\
    1e+1 & 3.42 & 3.8 & 5.6 & 13 & 53   & 7e-11 & 3.46 & 0.8 & 37 & 175 & 9e-11  \\
    1e+2 & 3.57 & 3.8 & 5.7 & 19 & 83   & 9e-11 & 3.63 & 0.8 & 50 & 235 & 7e-11  \\
    1e+3 & 3.62 & 3.8 & 5.7 & 28 & $115$ & 2e-10 & 3.72 & 0.9 & 57 & $260$ & 2e-10  \\ 
    1e+4 & 3.62 & 3.9 & 5.7 & 29 & $126$ & 5e-09 & 3.78 & 0.9 & 57 & $254$ & 5e-09  \\ 
    1e+5 & 3.62 & 3.9 & 5.8 & 32 & $144$ & 5e-08 & 3.78 & 0.9 & 63 & $272$ & 7e-08 \\ 
    \bottomrule
    \end{tabular}
\end{table}


\subsection{{Comparison to multigrid methods}} \label{s:aniso_poisson}

We compared \rchol{} to three multigrid methods including the combinatorial multigrid (CMG)~\cite{koutis2011combinatorial}\footnote{\url{http://www.cs.cmu.edu/~jkoutis/cmg.html}}, the Ruge-Stuben (classical) AMG (RS-AMG) and the smoothed aggregation AMG (SA-AMG). The RS-AMG and the SA-AMG are from the pyamg package~\cite{OlSc2018}\footnote{\url{https://github.com/pyamg/pyamg}}. 
We ran \rchol{} through the C++ interface. 

The test matrices include the four problems from the SuiteSparse Matrix Collection (see \cref{s:r_ufl}) and three matrices of size $128^3$ from discretizing the three Poisson equations, respectively. The results of comparison are shown in \cref{t:multigrid}, which shows that our method is the fastest for two of the problems, CMG is the fastest for one problem, and the classical AMG is the fastest for the other four problems. 

{As is well accepted by the scientific computing community, the performance of linear solvers may depend on the input matrices, and there is no single best solver for all problems. As a result, there exists different solvers/preconditioners including incomplete factorizations, multigrid, sparse direct solvers, etc. As \cref{t:multigrid} shows, multigrid methods  usually perform well on matrices corresponding to regular grids.}



\begin{table}
    \centering 
    \caption{Comparison to multigrid methods. Highlighted rows are the fastest solve time among all methods. See \Cref{s:r_ufl} for the first four matrices. The remaining three matrices are discretizations of Poisson, VC-Poisson ($\rho=\num{1e+05}$), and Aniso-Poisson ($\delta=\num{1e+04}$) equations on a 3D regular grid of size $128 \times 128 \times 128$. }
    \label{t:multigrid}
    \begin{minipage}{.5\linewidth}
    \begin{tabular}{cHHcccH} 
    \toprule
      \multirow{2}{*}{matrix}
    & \multicolumn{6}{c}{\rchol{}}  \\
     & fill/nnz & $t_p$ & $t_f$ & $t_s$ & $n_{it}$ & $res$ \\ 
    \midrule 
    ecology2 & 2.41 & 0.4 & 0.9 & 4.63 & 90 & 1e-08  \\
    parabolic\_fem & 2.27 & 0.4 & 0.9 & 2.08 & 67 & 1e-10  \\
    \rowcolor{Gray}
    {apache2} & 2.93 & 0.6 & 1.4 & 2.91 & 64 & 5e-10  \\
    \rowcolor{Gray}
    {G3\_circuit} & 2.68 & 1.4 & 2.5 & 7.96 & 90 & 1e-10  \\
    Poisson & 2.68 & 1.4 & 6.1 & 8.07 & 53 & 7e-11  \\
    VC-Poisson & 2.68 & 1.4 & 6.6 & 20.7 & 131 & 5e-08  \\
    Aniso-Poisson & & & 3.71 & 4.88 & 36 & 5.e-11 \\
    \bottomrule
    \end{tabular}
    \end{minipage}%
    \begin{minipage}{.5\linewidth}
    \begin{tabular}{cHHcccH} 
    \toprule
     \multirow{2}{*}{matrix}
    & \multicolumn{6}{c}{\texttt{CMG}}  \\
     & fill/nnz & $t_p$ & $t_f$ & $t_s$ & $n_{it}$ & $res$ \\ 
    \midrule 
    ecology2 & 2.41 & 0.4 & 1.0 & 4.27 & 58 & 9e-09  \\
    parabolic\_fem & 2.27 & 0.4 & 2.59 & 3.20 & 45 & 4e-10  \\
    apache2 & 2.93 & 0.6 & - & - & - & - \\
    G3\_circuit & 2.68 & 1.4 & 5.67 & 9.59 & 73 & 9e-11  \\
    Poisson & 2.68 & 1.4 & 7.51 & 7.60 & 43 & 7e-11  \\
    \rowcolor{Gray}
    {VC-Poisson} & 2.68 & 1.4 & 9.25 & 10.88 & 62 & 3e-08  \\
    Aniso-Poisson & & & 6.20 & 8.90 & 67 & 1e-10 \\
    \bottomrule
    \end{tabular}
    \end{minipage}

    \noindent
    \begin{minipage}{.5\linewidth}
    \begin{tabular}{cHHcccH} 
    \toprule
     \multirow{2}{*}{matrix}
    & \multicolumn{6}{c}{RS-AMG}  \\
     & fill/nnz & $t_p$ & $t_f$ & $t_s$ & $n_{it}$ & $res$ \\ 
    \midrule 
    \rowcolor{Gray}
    {ecology2} & 2.41 & 0.4 & 1.44 & 3.00 & 21 & 3e-09  \\
    \rowcolor{Gray}
    {parabolic\_fem} & 2.27 & 0.4 & 1.08 & 1.15 & 14 & 2e-11  \\
    apache2 & 2.93 & 0.6 & 1.17 & 13.38 & 101 & 5e-10 \\
    G3\_circuit & 2.68 & 1.4 & 2.38 & 10.82 & 39 & 5e-11  \\
    \rowcolor{Gray}
    {Poisson} & 2.68 & 1.4 & 6.15 & 5.34 & 13 & 9e-12  \\
    VC-Poisson & 2.68 & 1.4 & 6.55 & 15.68 & 38 & 1e-08  \\
    \rowcolor{Gray}
    {Aniso-Poisson} & & & 3.34 & 4.09 & 9 & 2e-12 \\
    \bottomrule
    \end{tabular}
    \end{minipage}%
    \begin{minipage}{.5\linewidth}
    \begin{tabular}{cHHcccH} 
    \toprule
     \multirow{2}{*}{matrix}
    & \multicolumn{6}{c}{SA-AMG}  \\
     & fill/nnz & $t_p$ & $t_f$ & $t_s$ & $n_{it}$ & $res$ \\ 
    \midrule 
    ecology2 & 2.41 & 0.4 & 3.30 & 2.54 & 19 & 2e-09  \\
    parabolic\_fem & 2.27 & 0.4 & 1.42 & 1.48 & 27 & 1e-10  \\
    apache2 & 2.93 & 0.6 & 2.91 & 6.68 & 49 & 4e-10 \\
    G3\_circuit & 2.68 & 1.4 & 7.29 & 23.43 & 67 & 1e-10  \\
    Poisson & 2.68 & 1.4 & 10.20 & 7.80 & 17 & 1e-11  \\
    VC-Poisson & 2.68 & 1.4 & 9.74 & 14.20 & 32 & 1e-08  \\
    Aniso-Poisson & & & 9.46 & 44.14 & 101 & 3e-07 \\
    \bottomrule
    \end{tabular}
    \end{minipage}    
\end{table}

\subsection{Parallel scalability} \label{s:r3}

In this section, we show the speedup of running \rchol{} with multiple threads and the stability of the resulting preconditioner in terms of the fill-in ratio and the PCG iteration. The test problem is solving the 3D Poisson's equation with the Dirichlet boundary condition in the unit cube, which is discretized using the 7-point stencil on regular grids. {We ran \rchol{} in single-precision floating-point arithmetic to reduce memory footprint and computation time, and we ran PCG in double precision. The use of single precision in the construction of preconditioners has been studied in the literature~\cite{giraud2008mixed,lindquist2020improving,abdelfattah2020survey,loe2021experimental}, which may lead to an increase of PCG iterations for difficult problems. Here, our results show that the use of single precision in \rchol{} does not impact the number of PCG iterations for solving discretized Poisson's equation.}

With $p=1$ thread, we used the AMD reordering; otherwise when $p>1$, we used a $\log_2 p$ -level ND ordering combined with the AMD ordering at the leaf level. All experiments were performed on an Intel Xeon Platinum 8280M (``Cascade Lake"), which has 112 cores on four sockets (28 cores/socket), and every thread is bound to a different core in a scattered fashion (e.g., the first four threads are each bound to one of the four sockets). We used the scalable memory allocator in the Intel TBB library.\footnote{\url{https://software.intel.com/content/www/us/en/develop/documentation/tbb-documentation/top/intel-threading-building-blocks-developer-guide/package-contents/scalable-memory-allocator.html}}

\cref{t:parallel} shows the results of three increasing problem sizes---the largest one being \emph{one billion} unknowns, and the factorization time scaled up to 64 threads in each case. (Results of parallel sparse triangular solves are given in \cref{s:d}.) For $N=1024^3$, the sequential factorization took nearly 42 minutes while it took approximately 3 minutes using 64 threads (cores), a $13.7\times$ speedup. \cref{t:parallel} also shows that the fill-in ratio and the PCG iteration are extremely stable regardless of the number of   threads  used. For the three problems, the memory footprint of the  preconditioners are about 1.7 GB, 15 GB and 130 GB, respectively,  in single precision, where we stored only a triangular  factor for every symmetric preconditioner.

\cref{f:parallel} shows the time spent on leaf tasks and separator tasks in  strong- and weak-scaling experiments, respectively; recall the task graph in \cref{f:nd}.
When $p$ doubles in strong scaling, the task tree increases by one level; in other words, every leaf task is  decomposed into two smaller leaf tasks plus a separator task. In addition, this decomposition computed algebraically by graph partitioning can hardly avoid load imbalance.  Therefore, the time reduction shrinks as $p$ increases in strong scaling.
When $p$ increases by $8\times$ in weak scaling, the task tree increases by three levels while the problem size associated with every leaf task remains the same if the partitioning is ideally uniform. In reality, however, load imbalance among leaf tasks becomes more and more significant as $p$ increases. The other reason for the increasing maximum running time of leaf tasks is that these tasks are memory-bound and suffer from memory bandwidth saturation if $p$ is large. The other bottleneck in weak scaling comes from the three extra levels of separator tasks when $p$ increases by $8\times$. Indeed, the top separator has size $\bigO(N^{2/3})$, but the corresponding task runs in sequential in our parallel algorithm. Parallelizing such tasks for separators at top levels is left as future work.

\cref{t:nitr} shows the effectiveness of the \rchol{} preconditioner computed with multiple threads, where the PCG iteration increases logarithmically with respect to the problem size $N$. By contrast, the PCG iteration with the \ichol{} preconditioner increases by approximately $2\times$ when the problem size $N$ increases by $8\times$ (the mesh is refined by $2\times$ in every dimension).


\begin{table}
    \centering 
    \caption{\em Parallel scalability on an Intel Cascade Lake that has 112 cores on four sockets. We applied \rchol{} to solving the 3D Poisson's equation (discretized with the 7-point stencil on regular grids). We used single-precision floating-point arithmetic in \rchol{}. 
    }
    \label{t:parallel}
    \begin{tabular}{c|ccc|ccc|ccc} 
    \toprule
    \multirow{2}{*}{$p$}
    & \multicolumn{3}{c|}{$N=256^3$} & \multicolumn{3}{c|}{$N=512^3$}  & \multicolumn{3}{c}{$N={1024^3}$} \\
    & fill/nnz & $t_f$ & $n_{it}$ & fill/nnz & $t_f$ & $n_{it}$ & fill/nnz & $t_f$ & $n_{it}$  \\ 
    \midrule
    1 & 3.56 & 19.9 & 57 & 3.93 & 226 & 65 & 4.31 & 2523 & 78 \\
    2 & 3.60 & 10.7 & 59 & 3.98 & 113 & 68 & 4.37 & 1279 & 79 \\
    4 & 3.61 & 5.7 & 57 & 3.98 & 58 & 65 & 4.39 & 664 & 75 \\
    8 & 3.63 & 3.3 & 61 &  3.99 & 35 & 65 &  4.38 & 388 & 75 \\
    16 & 3.66 & 2.3 & 59 & 4.00 & 23 & 65 & 4.38 & 258 & 76 \\
    32 & 3.66 & 1.9 & 57 & 4.02 & 18 & 64 & 4.39 & 197 & 71 \\
    64 & 3.66 & 1.7 & 57 & 4.02 & 16 & 67 & 4.38 & 184 & 75 \\
    \bottomrule
    \end{tabular}
\end{table}

\begin{figure}
\begin{center}
\scalebox{0.3}{\includegraphics{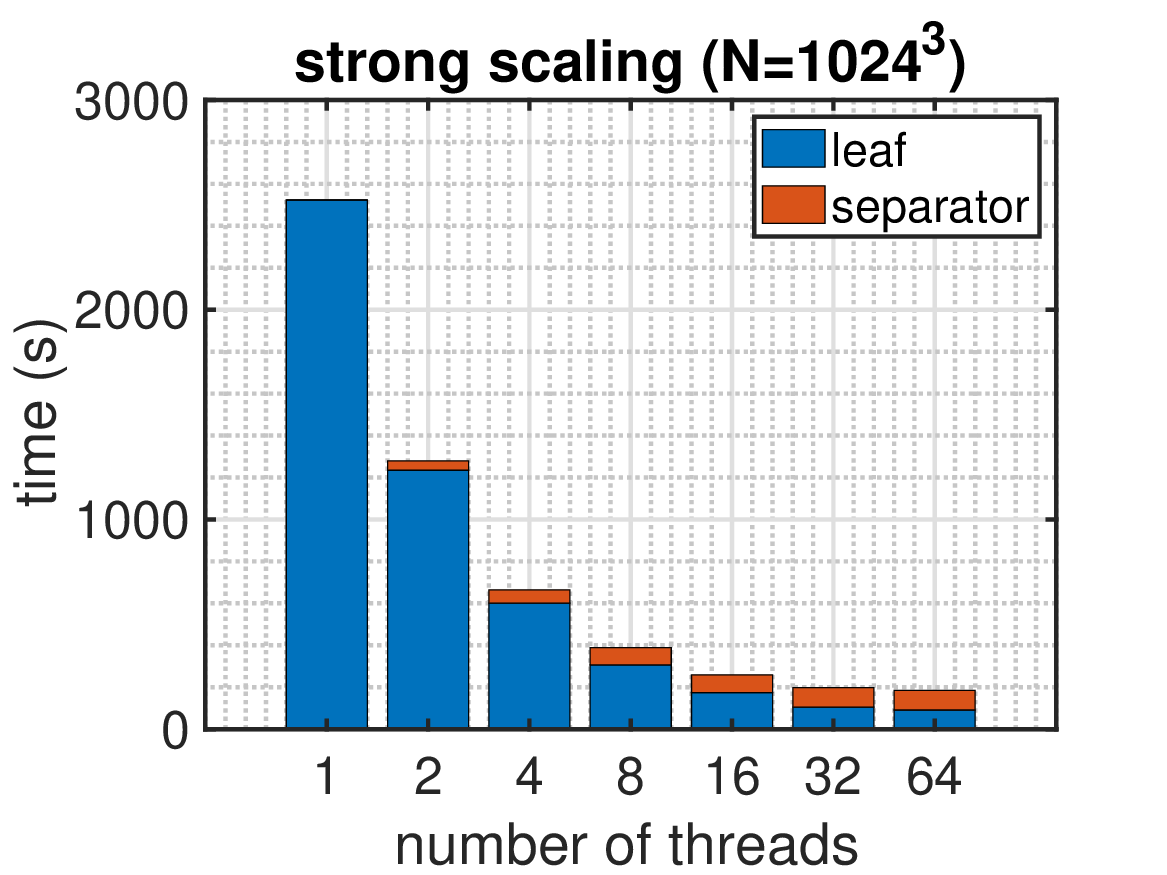}} 
\scalebox{0.3}{\includegraphics{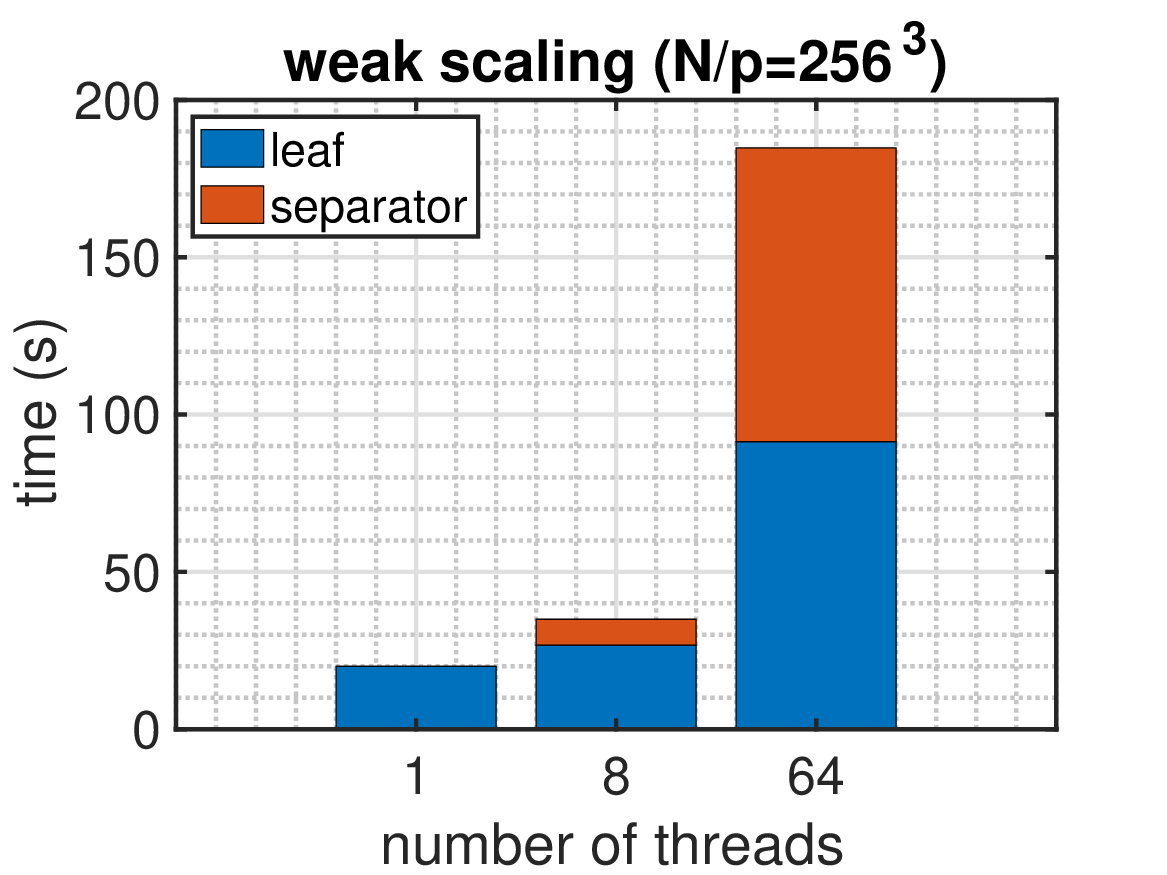}} 
\captionof{figure}{Strong and weak scalability of the \rchol{} factorization/construction time on an Intel Cascade Lake.
The input matrices are discretization of the 3D Poisson's equation using the 7-point stencil on regular grids.
We used single-precision floating-point arithmetic in \rchol{}.
``leaf'' denotes the maximum time of all leaf tasks executing in parallel, and ``separator'' denotes the remaining time spent on all separators. (Recall the task graph in \cref{f:nd}.)}
\label{f:parallel}
\end{center}
\end{figure}

\begin{table}
    \centering 
    \caption{\em Comparison of PCG iterations for solving the 3D Poisson equation discretized with the 7-point stencil on regular grids. 
    We did not run \ichol{} for $N=1024^3$ limited by our computation budget. 
    (We manually tuned the drop tolerance in \ichol{} to obtain preconditioners with slightly more fill-in. See \cref{t:parallel} for the fill-in of \rchol{} preconditioners.)}
    \label{t:nitr}
    \begin{tabular}{ccccc} 
    \toprule
     $N$ & $128^3$ & $256^3$ & $512^3$ & ${1024^3}$  \\
    \midrule
    \ichol{} & 100 & 185 & 341 & - \\    
    \rchol{} & 50 & 57 & 67 & 75 \\
    \bottomrule
    \end{tabular}
\end{table}

\section{Conclusions and generalizations} \label{s:end}

In this paper, we have introduced a preconditioner named \rchol{}  for solving SDD linear systems. To that end, we construct a closely related Laplacian linear system and apply the randomized Cholesky factorization. Two essential ingredients for achieving practical performance include a heuristic for sampling a clique and a fill-reducing reordering before factorization. The resulting sparse factorization is shown to outperform \ichol{} when both have roughly the same amount of fill-in. We view \rchol{} as a variant of standard incomplete Cholesky factorization. But unlike classical threshold-based dropping and level-based dropping, the sampling scheme in \rchol{} is an unbiased estimator: it randomly selects a subset of a clique and assigns them new weights. 
Interestingly, fill-reducing orderings are critical for the practical performance of \rchol{}, but is generally not effective for \ichol{}. In addition, the nested-dissection decomposition used in our parallel algorithm does not affect the performance of \rchol{}, but generally degrades the preconditioner quality of \ichol{}.

The described algorithm extends to the following two cases. The first  is that $A$ is an SPD matrix that has only non-positive off-diagonals (a.k.a., M-matrix). For such a matrix, there exists a positive diagonal matrix $D$ such that $DAD$ is SDDM~\cite{horn1994topics}, and then \rchol{} can be applied to $DAD$. The other  is that $A$ is the finite-element discretization of \cref{e:poisson} in a bounded open region with positive conductivity, i.e., $a(x)>0$. Such a matrix is generally SPD but not necessarily SDD, but there exists an analytical way to construct an SDD matrix whose preconditioner remains effective for $A$~\cite{boman2008solving}.

Three important directions for future research include:
\begin{itemize}
\item
Investigating variants of \cref{a:sample} to sample more edges in a clique, which leads to approximate Cholesky factorizations with more fill-in than the one computed by \rchol{}. Such approximations can potentially be more effective preconditioners for hard problems where the preconditioner based on \rchol{} converges slowly.





 \item
Parallelizing tasks for separators, especially for those at top levels. As \cref{f:parallel} shows, such tasks become the bottleneck of the parallel factorization time when a large number of threads are used. A naive method is to  apply the current parallel algorithm recursively on the (sparse) frontal matrices associated with those top separators.

\item
Extending the current framework combining Gaussian elimination with random sampling to unsymmetric matrices, which leads to an approximate LU factorization. See~\cite{cohen2018solving} for some progress in this direction.

\end{itemize}

\appendix

\section{Proof of \cref{th:complexity}} \label{s:a}

\begin{proof}
Consider the matrix/graph after an elimination step in \cref{a:rchol}; the number of non-zeros/edges decreases by 1. The reason is that at every step $n$ edges are eliminated and $n-1$ edges are added/sampled, where $n = |\mathcal{N}_k|$ is the number of neighbors or the number of non-zeros in the eliminated row/column excluding the diagonal. Since a random row/column is eliminated at every step, we have 
\[
\mathbb{E}[n] = \frac{M-k+1}{N-k+1}
\]
at the $k_\text{th}$ step. It is obvious to see that the computational cost and storage required by \cref{a:sample} is $\bigO(n)$ at every step. Therefore, the expected running time and the expected storage are both bounded by
\[
\sum_{k=1}^{N} \frac{M-k+1}{N-k+1}  < \sum_{k=1}^{N} \frac{M}{N-k+1}   < M \log N.
\]
\end{proof}

\section{Proof of equivalence in \cref{d:bsdd}} \label{s:b}

\subsection{Lemma}

\begin{lemma} \label{l:rank}
If matrix $A \in \mathbb{R}^{N \times N}$ is an irreducible SDD matrix, then $\mathrm{rank}(\hat{A}) \ge N-1$, where matrix  $\hat{A}$ defined in \cref{e:hat}.
\end{lemma}

\begin{proof}
Consider the following quadratic form given a nonzero $x \in \mathbb{R}^N$:
\[
x^\top \hat{A} \, x = \sum_{i,j} -a^n_{ij} (x_i - x_j)^2 + \sum_{i,j} a^p_{ij} (x_i + x_j)^2 \ge 0, 
\]
where $a^n_{ij}$ and $a^p_{ij}$ denote negative and positive off-diagonal entries in $A$, respectively. Suppose $x$ lies in the null space of $A$, we know that $x_i = x_j$ corresponding to every $a^n_{ij}$ and $x_i = - x_j$ corresponding to every $a^p_{ij}$. In addition, we know $x$ is entry-wise nonzero because $A$ is irreducible (underlying graph is connected). Therefore, we can find at most one such $x$ (up to a scalar multiplication), which implies that $\mathrm{rank}(\hat{A}) \ge N-1$.
\end{proof}

\subsection{Formal proof}

\begin{proof}
Assuming \textit{(a)} holds, we derive \textit{(c)}. There exists a nonzero $x \in \mathbb{R}^N$ such that $\hat{A} x = 0$. Consider the quadratic form
\[
x^\top \hat{A} \, x = \sum_{i,j} -a^n_{ij} (x_i - x_j)^2 + \sum_{i,j} a^p_{ij} (x_i + x_j)^2 = 0, 
\]
where $a^n_{ij}$ and $a^p_{ij}$ denote negative and positive off-diagonal entries in $A$, respectively. Hence, we know that $x_i = x_j$ corresponding to every $a^n_{ij}$ and $x_i = - x_j$ corresponding to every $a^p_{ij}$. In addition, we know $x$ is entry-wise nonzero because $A$ is irreducible (underlying graph is connected). Therefore, $x$ implies that  the graph $\G$ is 2-colorable in that all vertices $v_i$ corresponding to $x_i > 0$ have the same color while all vertices $v_i$ corresponding to $x_i < 0$ have the other color. 

Assuming \textit{(b)} holds, we derive \textit{(a)} and \textit{(c)} as follows. Without loss of generality (WLOG), suppose $D = \diag(\underbrace{1, \ldots, 1}_{n_1}, \underbrace{-1, \ldots, -1}_{n_2})$ and the matrix $A$ is partitioned as
\[
A = 
\begin{pmatrix}
A_{11} & A_{12}   \\
A_{21}  & A_{22}  \\
\end{pmatrix}
,
\]
where $A_{11} \in \mathbb{R}^{n_1 \times n_1}$ and $A_{22} \in \mathbb{R}^{n_2 \times n_2}$. Since $DAD$ has only non-positive off-diagonal entries,  $A_{11}$ and $A_{22}$ have non-positive off-diagonal entries, while $A_{12}$ and $A_{21}$ have non-negative entries. Hence, we know the following:
\begin{itemize}
\item
the vector $D \bone$ is in the null space of  $\hat{A}$, which is thus rank deficient. According to \cref{l:rank}, we know $\mathrm{rank}(\hat{A}) = N-1$.

\item
 the graph $\G$ is 2-colorable in that $v_1, v_2, \ldots, v_{n_1}$ have the first color, and $v_{n_1+1}, v_{n_1+2}, \ldots, v_{N}$ have the other color.
\end{itemize}

Assuming \textit{(c)} holds, we derive \textit{(b)}. WLOG, suppose $v_1, v_2, \ldots, v_{n_1}$ have the same color, which is different from the color that $v_{n_1+1}, v_{n_1+2}, \ldots, v_{N}$ have. In other words, matrix $A$ can be partitioned into
\[
A = 
\begin{pmatrix}
A_{11} & A_{12}   \\
A_{21}  & A_{22}  \\
\end{pmatrix}
,
\]
where $A_{11} \in \mathbb{R}^{n_1 \times n_1}$ and $A_{22} \in \mathbb{R}^{n_2 \times n_2}$ have non-positive off-diagonal entries, and $A_{12}$ and $A_{21}$ have non-negative entries. Therefore, the diagonal rescaling $D$ given by $\diag(\underbrace{1, \ldots, 1}_{n_1}, \underbrace{-1, \ldots, -1}_{n_2})$ satisfies that $DAD$ has only non-positive off-diagonal entries.
\end{proof}

\section{Proof of \cref{th:irreducible}} \label{s:c}

\begin{proof}
WLOG, assume $A = \hat{A} \in \mathbb{R}^{N \times N}$; in other words, every diagonal entry equals to the sum of the absolute value of off-diagonal entries on the same row/column. Suppose there exists a nonzero vector in the null space of $\tilde{A} \in \mathbb{R}^{2N \times 2N}$, i.e.,
\[
\begin{pmatrix}
A_d+A_n & -A_p \\
-A_p & A_d+A_n
\end{pmatrix}
\begin{pmatrix}
x_1 \\
x_2
\end{pmatrix}
=0,
\]
where $x_1,x_2 \in \mathbb{R}^N$. It is easy to see that
\begin{align*}
(A_d+A_n + A_p)(x_1 - x_2) &= 0, \\
(A_d+A_n - A_p)(x_1 + x_2) &= 0. 
\end{align*}
Since $A = A_d+A_n + A_p$ is an irreducible non-bipartite SDD matrix, we know $\mathrm{rank}(\hat{A}) = \mathrm{rank}(A) = N$. Hence, $x_1 = x_2$. It is straightforward to verify that $A_d+A_n - A_p$ is a Laplacian matrix, and thus $x_1=x_2 \in \text{span}\{\bone\}$. Therefore, we know $\mathrm{rank}(\tilde{A}) = 2N-1$, which implies that Laplacian matrix $\tilde{A}$ is irreducible.

\end{proof}

\section{High-contrast coefficients for VC-Poisson} \label{s:e}
One instance of the random coefficients constructed in \cref{e:a} is shown in \cref{f:a}.
\begin{figure}
\begin{center}
\scalebox{0.2}{\includegraphics{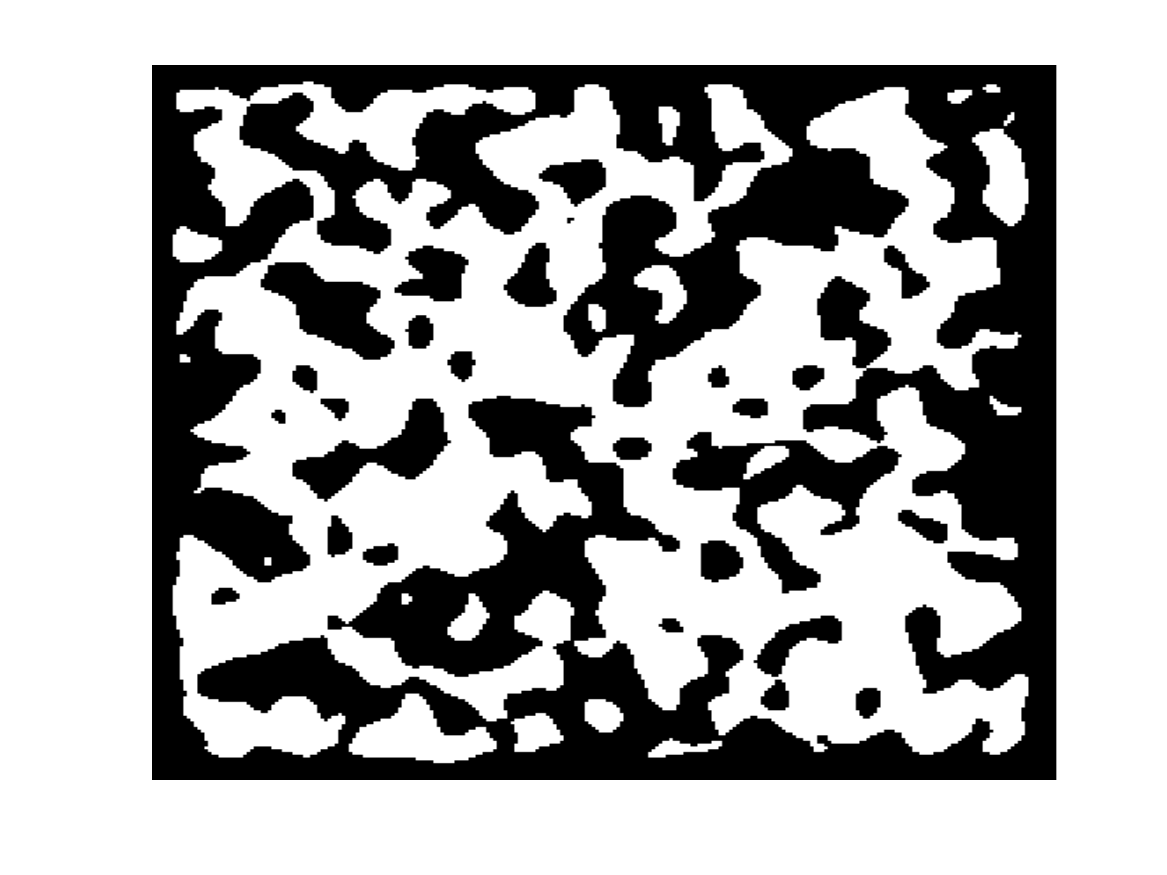}} 
\caption{Example of the high-contrast coefficients for the variable-coefficient Poisson's equation in the unit square with a 2D grid $256 \times 256$.}
\label{f:a}
\end{center}
\end{figure}

\section{{Results of parallel sparse triangular solve}} \label{s:d}

\cref{t:psolve} shows parallel timing results of the parallel sparse triangular solve. The cholesky factor $G$ was stored in the compressed sparse column format. Therefore, the upper triangular solve involving $G^\top$ was implemented in a straightforward way by a preorder traversal of the tree data structure used in \rchol{}; see \cref{s:parallel}. The lower triangular solve was implemented using a postorder traversal of our tree data structure. We implemented the parallel lower solve using  an asynchronous approach, where the two child nodes updates the data owned by their parent asynchronously following ideas in~\cite{glusa2020scalable,chow2015fine}.

\begin{table}
    \centering 
    \caption{\em Parallel sparse triangular solve (per iteration) on an Intel Cascade Lake that has 112 cores on four sockets. The matrices are from discretizing Poisson's equation on a 3D regular grid with the standard 7-point stencil.}
    \label{t:psolve}
    \begin{tabular}{c|ccc|ccc|ccc} 
    \toprule
    \multirow{2}{*}{$p$}
    & \multicolumn{3}{c|}{$N=128^3$} & \multicolumn{3}{c|}{$N=256^3$}  & \multicolumn{3}{c}{$N={512^3}$} \\
    & $t_\text{lower}$ & $t_\text{upper}$ & $n_{it}$ & $t_\text{lower}$ & $t_\text{upper}$ & $n_{it}$ & $t_\text{lower}$ & $t_\text{upper}$ & $n_{it}$  \\ 
    \midrule
    1 & 0.0400 & 0.0430 & 50 & 0.409 & 0.409 & 57 & 5.59 & 4.49 & 64 \\
    2 & 0.0499 & 0.0536 & 50 & 0.333 & 0.348 & 57 & 2.93 & 2.69 & 67 \\
    4 & 0.0423 & 0.0446 & 50 & 0.199 & 0.197 & 58 & 1.51 & 1.31 & 65 \\
    8 & 0.0280 & 0.0301 & 53 &  0.157 & 0.161 & 54 & 0.962 & 0.814 & 64  \\
    16 & 0.0177 & 0.0200 & 49 & 0.136 & 0.136 & 59 & 0.730 & 0.536 & 65 \\
    32 & 0.0123 & 0.0140 & 49 & 0.113 & 0.121 & 55 & 0.603 & 0.404 & 64 \\
    64 & 0.0126 & 0.0107 & 50 & 0.104 & 0.104 & 57 & 0.653 & 0.429 & 67 \\
    \bottomrule
    \end{tabular}
\end{table}





\bibliographystyle{siamplain}
\bibliography{biblio}

\end{document}